\DeclareMathAlphabet{\mathpzc}{OT1}{pzc}{m}{it}
\def\BZ{\mathbb{Z}}
\def\adots{\mathinner{\mkern1mu\raise1.0pt\vbox{\kern7.0pt\hbox{.}}\mkern2mu\raise4.0pt\hbox{.}\mkern2mu\raise7.0pt\hbox{.}\mkern1mu}}
\def\dim{\operatorname{dim}}
\def\prod{\operatorname{prod}}
\newcommand{\Set}[1]{\ensuremath{\mathcal{#1}}}  
\newcommand{\size}[1]{\left\lvert #1\right\rvert}
\newcommand{\qi}[2][q]{[#2]_{#1}}
\newcommand{\qbinom}[3][q]{\genfrac{[}{]}{0pt}{}{#2}{#3}_{#1}}
\def\T{\mathcal T}%
\def\P{\mathcal P}%
\def\F{\mathcal F}%
\def\G{\mathcal G}%
\newcommand{\Dfn}[1]{\emph{#1}} 
\numberwithin{equation}{section}
\newtheorem{Lemma}{Lemma}[section]
\newtheorem{Theorem}[Lemma]{Theorem}
\newtheorem{Proposition}[Lemma]{Proposition}
\newtheorem{Corollary}[Lemma]{Corollary}
\theoremstyle{definition}
\newtheorem{Definition}[Lemma]{Definition}
\newtheorem{Remark}[Lemma]{Remark}
\newtheorem{Example}[Lemma]{Example}
\newcommand{\done}[1]{}
\begin{document}

\setlength{\parindent}{0pt}
\setlength{\parskip}{4pt}

\title[Torsion pairs in cluster tubes]{Torsion pairs in 
cluster tubes}

\author{Thorsten Holm}
\address{Institut f\"{u}r Algebra, Zahlentheorie und Diskrete
  Mathematik, Fa\-kul\-t\"{a}t f\"{u}r Ma\-the\-ma\-tik und Physik, Leibniz
  Universit\"{a}t Hannover, Welfengarten 1, 30167 Hannover, Germany}
\email{holm@math.uni-hannover.de}
\urladdr{http://www.iazd.uni-hannover.de/\~{ }tholm}

\author{Peter J\o rgensen}
\address{School of Mathematics and Statistics,
Newcastle University, Newcastle upon Tyne NE1 7RU, United Kingdom}
\email{peter.jorgensen@ncl.ac.uk}
\urladdr{http://www.staff.ncl.ac.uk/peter.jorgensen}

\author{Martin Rubey}
\address{Endresstr.\ 59/14, 1230 Wien, Austria}
\email{martin.rubey@math.uni-hannover.de}
\urladdr{http://www.iazd.uni-hannover.de/~rubey}


\thanks{{\em Acknowledgement. }This work has been carried out in the framework of the
  research priority programme SPP 1388 {\em Darstellungstheorie} of
  the Deutsche Forschungsgemeinschaft (DFG).  We gratefully acknowledge
  financial support through the grants HO 1880/4-1 and
  HO 1880/5-1. }

\keywords{Auslander-Reiten quiver, cluster category, cluster tilting 
object, triangulated category, tube}

\subjclass[2010]{Primary: 13F60, 18E30; Secondary: 05E15, 16G70}

\begin{abstract} 

We give a complete classification of torsion pairs in the cluster
categories associated to tubes of finite rank. The classification
is in terms of combinatorial objects called Ptolemy diagrams which
already appeared in our earlier work on torsion pairs in cluster 
categories of Dynkin type A. As a consequence of our classification 
we establish closed formulae enumerating the torsion pairs in cluster 
tubes, and obtain that the torsion pairs in cluster tubes exhibit a 
cyclic sieving phenomenon.
\end{abstract}

\maketitle


\begin{center}
{\em Dedicated to Idun Reiten on the occasion of her 70th birthday.}
\end{center}

\vskip0.5cm


\done{I tried to make characterisation (iv) consistent with the new
  notation, but it's not completely trivial, because we have to allow
  pairs $\big((i,j), W\big)$ there, where $(i,j)$ is not a vertex
  because $j-i = 1$.}%

\section{Introduction}
\label{sec:introduction}

Torsion pairs form a fundamental and important aspect of representation
theory. Their definition for abelian categories goes back to a paper 
by S.\,Dickson \cite{Dickson} from the mid 1960s. Torsion pairs play a 
particularly prominent role in the context of tilting theory.
The latter has been introduced in the early 1980s in the
seminal papers by S.\,Brenner and M.\,Butler \cite{Brenner-Butler} and by 
D.\,Happel and C.\, Ringel \cite{Happel-Ringel} and has remained a key topic
in the representation theory of finite-dimensional algebras
ever since.   

Many important developments in modern representation theory take place 
not in the module categories themselves but in related triangulated  
categories like stable and derived categories or cluster categories. 
The latter have been introduced in \cite{BMRRT} and
provide a highly successful categorification of Fomin and 
Zelevinsky's cluster algebras. 

In this paper we consider cluster categories coming from tubes
and as main result give a complete classification of the torsion pairs
in these {\em cluster tubes}. This classification is in terms of 
certain combinatorial objects called Ptolemy diagrams which have been
introduced by P. Ng \cite{Ng} and already
occured in the classifications of torsion pairs in cluster categories
of Dynkin type $A$ \cite{HJR-Ptolemy} and $A_{\infty}$ \cite{Ng}.

It turns out that for a given rank there are only finitely 
many torsion pairs in the cluster tube and as a second main result we provide 
a complete enumeration for torsion pairs in cluster tubes, i.e.\ we 
establish a formula for the generating function and from that deduce a
closed formula for the number of torsion pairs in cluster tubes
of a given rank. 

Tubes arise naturally and frequently in representation theory, 
not least as components of the Auslander-Reiten quivers of many finite
dimensional algebras. Tubes are stable translation quivers of the 
form $\mathbb{Z}A_{\infty}/(n)$ for a natural number $n$, i.e.\ they 
have the 
shape of a cylinder which is infinite on one side. For details on
how tubes arise in the Auslander-Reiten theory of algebras of tame
representation type we refer to C.\,M.\,Ringel's book \cite[Chapter 3]{Ringel}.
Taking the additive hull of the mesh category of a tube as above
one obtains an abelian category $\mathsf{T}_n$, called a {\em tube 
category of rank $n$}. 
It naturally occurs in the representation 
theory of extended Dynkin quivers $\tilde{A}$; namely, the category 
$\mathsf{T}_n$ is equivalent to the category of nilpotent representations 
of a cyclic quiver with $n$ vertices and edges cyclically oriented
(cf. \cite[sec.\ 3.6\,(6)]{Ringel}).

In a recent paper, K. Baur, A. Buan and R. Marsh \cite{BBM}
have classified torsion
pairs in this abelian category $\mathsf{T}_n$.
On the other hand, our results in this paper concern the triangulated 
cluster category (to be defined below) attached to the tube category. 
Although both categories have the same Auslander-Reiten quiver
it turns out that their torsion theories are different. In particular, 
our results in this paper do not overlap with the results in \cite{BBM}.

The abelian category $\mathsf{T}_n$ has many nice properties, in particular
it is hereditary and Hom-finite.
This allows to form, as in \cite{BMRRT}, the cluster category attached
to $\mathsf{T}_n$ as the orbit category
$$\mathsf{C}_{n} := D^b(\mathsf{T}_n)/(\tau^{-1}\circ \Sigma)$$
where $D^b(\mathsf{T}_n)$ is the bounded derived category of the 
hereditary abelian category $\mathsf{T}_n$ and $\tau$ and $\Sigma$
are the Auslander-Reiten translation and the suspension on 
$D^b(\mathsf{T}_n)$, respectively. From a result of B. Keller \cite{Keller}
it follows that this orbit category is a 
triangulated category. 

This triangulated category $\mathsf{C}_{n}$ is called 
the {\em cluster tube of rank $n$}. 

We shall prove the following result in this paper which provides a
complete classification of torsion pairs in cluster tubes. In fact, 
as an important step on the way we show in Proposition 
\ref{prop:torsion-vs-Ptolemy} that for each torsion pair
in $\mathsf{C}_n$ precisely one of the subcategories $\mathsf{X}$
and $\mathsf{X}^{\perp}$ has only finitely many indecomposable objects. 

For the definition of Ptolemy diagrams and levels 
we refer to Section \ref{sec:model} and for wings to Section 
\ref{sec:classification} below.

\begin{Theorem} 
There are bijections between the following sets:
\begin{enumerate}
\item[{(i)}] 
Torsion pairs $(\mathsf{X},\mathsf{X}^{\perp})$
in the cluster tube $\mathsf{C}_n$ such that $\mathsf{X}$ has only
finitely many indecomposable objects. 
\item[{(ii)}] $n$-periodic Ptolemy diagrams $\mathcal{X}$ of the
$\infty$-gon such that all arcs in $\mathcal{X}$ have length at most $n$.
\item[{(iii)}] Collections 
$\{([(i_1,j_1)],[W_1]),\ldots,([(i_r,j_r)],[W_r])\}$
of pairs consisting of vertices $[(i_{\ell},j_{\ell})]$
of level $\le n-1$ in the AR-quiver of $\mathsf{C}_n$ and 
subsets $[W_{\ell}]\subseteq W[(i_{\ell},j_{\ell})]$ of their wings 
such that for any different $k,{\ell}\in\{1,\ldots,r\}$ we have
$$\Sigma\, W[(i_k,j_k)]\cap W[(i_{\ell},j_{\ell})]
=\emptyset
$$
and the $n$-periodic collection $W_{\ell}$ of arcs corresponding to $[W_{\ell}]$ is a Ptolemy diagram (in which every arc is 
overarched by some arc from the collection corresponding 
to $[(i_{\ell},j_{\ell})]$). 
\end{enumerate}
\end{Theorem}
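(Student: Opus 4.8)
The plan is to establish the bijections one at a time: first (i)$\leftrightarrow$(ii), then (ii)$\leftrightarrow$(iii). Throughout I use the combinatorial model of $\mathsf{C}_n$ from Section~\ref{sec:model}: the indecomposable objects of $\mathsf{C}_n$ are in bijection with the $n$-periodic families of arcs of the $\infty$-gon --- equivalently, with $\mathbb{Z}/n$-orbits of arcs --- and under this bijection the Hom- and $\Ext^1$-groups are controlled by the crossing and overarching combinatorics of arcs; in particular the $\Ext^1$ between the objects attached to two arcs $a,b$ is nonzero exactly when $a$ and $b$ cross, and then the four arcs of the Ptolemy completion of $\{a,b\}$ occur as the summands of the middle terms of the (two) non-split extensions between these objects, an arc of length $1$ contributing the zero object. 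The map from (i) to (ii) sends a torsion pair $(\mathsf{X},\mathsf{X}^{\perp})$ with $\mathsf{X}$ of finite type to the family $\mathcal{X}$ of arcs attached to the indecomposable summands of the objects of $\mathsf{X}$, and its inverse sends an $n$-periodic Ptolemy diagram $\mathcal{X}$ with arcs of length at most $n$ to the additive subcategory generated by the attached indecomposables; Proposition~\ref{prop:torsion-vs-Ptolemy} guarantees that nothing is lost by restricting to torsion pairs whose first half is of finite type.

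For (i)$\to$(ii): the family $\mathcal{X}$ attached to such an $\mathsf{X}$ is $n$-periodic by construction. If $a,b\in\mathcal{X}$ cross, a non-split extension between the two objects produces a triangle whose middle term is a sum of indecomposables attached to arcs of the Ptolemy completion of $\{a,b\}$; since $\mathsf{X}$ is closed under extensions and summands, these arcs lie in $\mathcal{X}$, and running over both extensions shows $\mathcal{X}$ is a Ptolemy diagram. That all arcs of $\mathcal{X}$ have length at most $n$ is where finite type is used: if $a\in\mathcal{X}$ had length $m>n$, then --- $\mathcal{X}$ being $n$-periodic --- it would contain two translates of $a$ whose endpoints interleave (precisely because $m>n$), so the Ptolemy condition would force an arc of length $m+n$ into $\mathcal{X}$, and iterating would yield arcs of lengths $m,m+n,m+2n,\dots$, hence infinitely many non-isomorphic indecomposables in $\mathsf{X}$ --- impossible. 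Conversely, a Ptolemy diagram with all arcs of length at most $n$ has only finitely many $\mathbb{Z}/n$-orbits of arcs, so the associated subcategory is of finite type.

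For (ii)$\to$(i), which I expect to be the main obstacle: given an $n$-periodic Ptolemy diagram $\mathcal{X}$ with arcs of length at most $n$, let $\mathsf{X}$ be the additive subcategory generated by the attached indecomposables (of finite type by the previous paragraph). One must show $(\mathsf{X},\mathsf{X}^{\perp})$ is a torsion pair, i.e. that every $c\in\mathsf{C}_n$ fits in a triangle $x\to c\to y\to\Sigma x$ with $x\in\mathsf{X}$ and $y\in\mathsf{X}^{\perp}=\{\,c : \Hom(\mathsf{X},c)=0\,\}$. Because $\mathsf{X}$ is of finite type it is contravariantly finite, so take a right $\mathsf{X}$-approximation $x\to c$ and complete it to a triangle; the real work is to show its third term $y$ lies in $\mathsf{X}^{\perp}$. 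I would do this in the model, computing the approximation and its cone explicitly in terms of arcs: the approximation absorbs exactly the arcs of $\mathcal{X}$ admitting a nonzero morphism to $c$, leaving $y$ with arcs that receive no nonzero morphism from any arc of $\mathcal{X}$ --- and it is precisely here that one needs $\mathcal{X}$ to be Ptolemy-closed, to rule out the arcs that would otherwise contribute. One also checks $\mathsf{X}={}^{\perp}(\mathsf{X}^{\perp})$, after which the two assignments between (i) and (ii) are mutually inverse. This runs parallel to the corresponding arguments for cluster categories of Dynkin type $A$ in \cite{HJR-Ptolemy} and of type $A_{\infty}$ in \cite{Ng}.

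Finally, (ii)$\leftrightarrow$(iii) is a structure theorem for $n$-periodic Ptolemy diagrams with arcs of length at most $n$. Given such a diagram $\mathcal{X}$, decompose it by its arcs maximal under overarching. An arc of length exactly $n$ cannot be crossed by any arc of such a diagram --- the Ptolemy completion of such a crossing would contain a longer arc --- so these are automatically maximal; every maximal arc, being an $n$-periodic family, is a vertex $[(i_\ell,j_\ell)]$ of the AR-quiver of level at most $n-1$, and the arcs of $\mathcal{X}$ it overarches form a Ptolemy subdiagram $[W_\ell]\subseteq W[(i_\ell,j_\ell)]$; the diagram $\mathcal{X}$ is then the union of the maximal families with the subdiagrams $W_\ell$. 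The condition $\Sigma\,W[(i_k,j_k)]\cap W[(i_\ell,j_\ell)]=\emptyset$ for $k\neq\ell$ records exactly the relative position of two distinct maximal wings compatible with coexistence in a Ptolemy diagram --- using $\Sigma\cong\tau$ on $\mathsf{C}_n$, it excludes both nesting and the adjacency that would force a longer common overarching arc. Conversely, the union of the wings of any collection as in (iii), each carrying its chosen subdiagram, reassembles into an $n$-periodic Ptolemy diagram with arcs of length at most $n$, and the two operations are mutually inverse, giving the periodic analogue of the structure theorem for Ptolemy diagrams in \cite{HJR-Ptolemy}.
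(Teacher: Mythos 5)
Your plan has the right overall shape, but the direction (ii)$\to$(i) --- showing that an $n$-periodic Ptolemy diagram $\mathcal{X}$ with arcs of length at most $n$ actually produces a torsion pair $(\mathsf{X},\mathsf{X}^{\perp})$ --- is left as a sketch: you say you ``would'' compute a right $\mathsf{X}$-approximation and its cone explicitly in the arc model, but you do not carry that out, and you yourself flag this as the main obstacle. This is a genuine gap, and moreover an avoidable one: the work has already been packaged into Proposition~\ref{prop:torsion-vs-Ptolemy}(b), which states outright that every $n$-periodic Ptolemy diagram gives rise to a torsion pair $(\mathsf{X},\mathsf{X}^{\perp})$ with exactly one half of finite type. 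You cite this proposition, but only for the weaker observation that one may restrict attention to torsion pairs whose first half is of finite type, and so you do not notice that part (b) closes exactly the gap you describe. With it in hand, (ii)$\to$(i) collapses to one line: $\mathsf{X}$ is a half of a torsion pair, and the bound on arc lengths forces it to be the finite half, since an arc of length $m$ corresponds to an indecomposable of level $m-1$ and there are only finitely many of those.

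Your argument for (i)$\to$(ii) also departs from the paper in a way that introduces an unjustified step: you deduce the Ptolemy property from closure under extensions plus the claim that the middle term of a non-split extension between crossing arcs is the direct sum of the four arcs of their Ptolemy completion. That description of extension triangles is familiar from cluster categories of type $A$, but the paper never proves it for cluster tubes (Proposition~\ref{prop:BM} only gives the dimension of $\operatorname{Ext}^1$, not the middle terms), and the paper deliberately sidesteps it via Proposition~\ref{prop:Ptolemy}, which identifies $\mathsf{X}={}^{\perp}(\mathsf{X}^{\perp})$ with $\mathcal{X}=\mathsf{nc}\,\mathsf{nc}\,\mathcal{X}$ and hence with the Ptolemy condition. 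Your reasoning for why arcs must have length at most $n$ (interleaving translates of a long arc force arbitrarily long arcs into $\mathcal{X}$) is correct and is essentially the content of Proposition~\ref{prop:finite-infinite}. Your treatment of (ii)$\leftrightarrow$(iii) --- decomposing into maximal overarching arcs, recording their wings, and reading off the non-crossing condition as $\Sigma\, W[(i_k,j_k)]\cap W[(i_{\ell},j_{\ell})] = \emptyset$ --- matches the paper.
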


Along the way we also obtain an independent proof of 
the following result of A. Buan, R. Marsh and D. Vatne.

\begin{Corollary}[\cite{BMV}]
The cluster tubes $\mathsf{C}_{n}$ do not contain any 
cluster tilting objects. 
\end{Corollary}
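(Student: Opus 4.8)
The plan is to derive the Corollary as a quick consequence of the classification Theorem together with the fact (recorded in Proposition~\ref{prop:torsion-vs-Ptolemy}) that for every torsion pair $(\mathsf{X},\mathsf{X}^{\perp})$ in $\mathsf{C}_n$ exactly one of the two subcategories has only finitely many indecomposables. First I would recall the relevant formal fact: a cluster tilting object $T$ in $\mathsf{C}_n$ gives rise to the torsion pair $(\add T, \add\Sigma T)$, and since $\mathsf{C}_n$ is $2$-Calabi--Yau with $\Sigma$ an autoequivalence, $\add T$ and $\add\Sigma T = (\add T)^{\perp}$ have the same (finite) number of indecomposable objects, namely the number of indecomposable summands of $T$. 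So a cluster tilting object would produce a torsion pair in which \emph{both} $\mathsf{X}$ and $\mathsf{X}^{\perp}$ have only finitely many indecomposables.

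Next I would argue that no such torsion pair exists, by inspecting the combinatorial model supplied by part~(ii) of the Theorem. A torsion pair with $\mathsf{X}$ of finite type corresponds to an $n$-periodic Ptolemy diagram $\mathcal{X}$ of the $\infty$-gon all of whose arcs have length at most $n$; the complementary subcategory $\mathsf{X}^{\perp}$ is then encoded by the ``complementary'' region of the diagram. The key point is that the indecomposable objects of $\mathsf{X}^{\perp}$ are in bijection with the arcs (and asymptotic/boundary arcs) \emph{not} overarched by $\mathcal{X}$, and an $n$-periodic configuration of arcs of bounded length always leaves infinitely many such arcs uncovered — concretely, arbitrarily long arcs, or arcs that stretch across more than $n$ vertices, can never be overarched by a diagram whose arcs all have length $\le n$. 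Hence $\mathsf{X}^{\perp}$ always has infinitely many indecomposables whenever $\mathsf{X}$ has finitely many, and symmetrically (swapping the roles via $\Sigma$, or via the analogous statement for torsion pairs with $\mathsf{X}^{\perp}$ of finite type) one never gets both finite. Therefore no cluster tilting object can exist.

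The main obstacle is making precise the claim that a torsion pair cannot have both $\mathsf{X}$ and $\mathsf{X}^{\perp}$ of finite type, i.e.\ extracting from the classification exactly how $\mathsf{X}^{\perp}$ sits inside the Ptolemy-diagram picture and why it is forced to be infinite. This is essentially bookkeeping with the $\infty$-gon model and the length/overarching conditions in~(ii)–(iii), so it should be routine once the dictionary between torsion pairs and Ptolemy diagrams is in place; I expect the argument to be short. An alternative, perhaps even cleaner, route is to use Proposition~\ref{prop:torsion-vs-Ptolemy} directly: it already asserts that \emph{precisely one} of $\mathsf{X},\mathsf{X}^{\perp}$ is of finite type, which immediately rules out the symmetric situation $(\add T,\add\Sigma T)$ forced by a cluster tilting object and finishes the proof in one line. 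I would present the proof in this second form, citing Proposition~\ref{prop:torsion-vs-Ptolemy}, and remark that it recovers the result of \cite{BMV}.
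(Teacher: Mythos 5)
Your proposal is correct and, in the form you ultimately settle on (citing Proposition~\ref{prop:torsion-vs-Ptolemy} directly to rule out a torsion pair with both halves of finite type), coincides with the paper's own proof: a cluster tilting object $u$ would give the torsion pair $(\add u, \Sigma\add u)$ with both sides finite, contradicting that proposition. The longer detour through the Ptolemy-diagram model in your first paragraphs essentially re-derives part of Proposition~\ref{prop:finite-infinite} and can safely be omitted.
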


One of the outcomes of the above classification result is that for a given
rank $n$ there are only finitely many torsion pairs in the cluster
tube $\mathsf{C}_{n}$. In the final section we count 
the number of torsion pairs and obtain the following enumeration
result.

\begin{Theorem}
The number of torsion pairs in the cluster tube $\mathsf{C}_n$ is
equal to 
$$
  \T_n = \sum_{\ell\geq 0}%
  2^{\ell+1}\binom{n-1+\ell}{\ell}\binom{2n-1}{n-1-2\ell}.
  $$
\end{Theorem}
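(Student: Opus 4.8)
The plan is to combine the classification theorem above with a generating function computation. The first step reduces the enumeration of \emph{all} torsion pairs to that of the combinatorial objects in parts~(ii)/(iii). By Proposition~\ref{prop:torsion-vs-Ptolemy}, every torsion pair $(\mathsf{X},\mathsf{X}^{\perp})$ of $\mathsf{C}_n$ has \emph{precisely one} of $\mathsf{X}$, $\mathsf{X}^{\perp}$ with only finitely many indecomposables, so the torsion pairs split into those with $\mathsf{X}$ finite and those with $\mathsf{X}^{\perp}$ finite. Since shifting a torsion pair again gives a torsion pair and $\Sigma$ is an autoequivalence, the map $(\mathsf{X},\mathsf{X}^{\perp})\mapsto(\mathsf{X}^{\perp},\Sigma\mathsf{X})$, with inverse $(\mathsf{A},\mathsf{B})\mapsto(\Sigma^{-1}\mathsf{B},\mathsf{A})$, is a bijection interchanging these two classes. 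Hence $\T_n = 2A_n$, where $A_n$ is the number of $n$-periodic Ptolemy diagrams of the $\infty$-gon all of whose arcs have length at most $n$; I will compute $A_n$ from characterisation~(iii).

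Next I would determine the generating function from the nested structure of the objects in~(iii). Such an object is an $n$-periodic family of pairwise $\Sigma$-disjoint wings, each decorated by a sub-Ptolemy diagram of its interior in which every arc is overarched by the defining arc of the wing; and the interior of a wing is built recursively from smaller such wings. Writing $W(x,y)$ for the generating function of a single decorated wing, with $x$ marking the rank and $y$ the level, the recursion together with the crossing-closure (Ptolemy) condition determines $W$ through an algebraic equation --- a quadratic, in parallel with the type $A$ computation of \cite{HJR-Ptolemy}. One then assembles the full $n$-periodic configuration around the mouth of the tube by a ``necklace'' / transfer argument, alternating wings with the gaps between them around a cycle of $n$ mouth vertices, subject to the length-at-most-$n$ restriction and treating separately the degenerate configurations containing an arc of length exactly $n$ that wraps around. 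This produces $\sum_{n\ge1}\T_n x^n$ as an explicit algebraic function, with the relevant branch $w(x)$ satisfying a relation of the form $w(1-2w^2)=x(1+w)^2$.

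Finally I would extract coefficients. The generating function obtained in Step~2 should be massaged so that its coefficients appear as $\T_n = 2\,[z^{n-1}]\,G(z)\phi(z)^{n-1}$ with $\phi(z)=\frac{(1+z)^2}{1-2z^2}$ (so $\phi(0)\neq0$) and $G(z)=\frac{1+z}{1-2z^2}$; equivalently $\T_n = 2\,[z^{n-1}](1+z)^{2n-1}(1-2z^2)^{-n}$, which one verifies via Lagrange inversion in its diagonal form. Then expanding $(1+z)^{2n-1}=\sum_k\binom{2n-1}{k}z^k$ and $(1-2z^2)^{-n}=\sum_{\ell}\binom{n-1+\ell}{\ell}2^{\ell}z^{2\ell}$ and reading off the coefficient of $z^{n-1}$, so that $k=n-1-2\ell$, gives exactly
\[
\T_n = \sum_{\ell\ge0}2^{\ell+1}\binom{n-1+\ell}{\ell}\binom{2n-1}{n-1-2\ell}.
\]
Here $\binom{2n-1}{n-1-2\ell}$ is the Catalan-type contribution of the skeleton of arcs, $\binom{n-1+\ell}{\ell}$ records the choice of levels, the factor $2^{\ell}$ is a two-fold local choice attached to each of the $\ell$ wings contributing to it, and the outer $2$ is the finite-$\mathsf{X}$ versus finite-$\mathsf{X}^{\perp}$ symmetry from Step~1.

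I expect Step~2 to be the main obstacle: faithfully translating the Ptolemy crossing-closure condition together with the $\Sigma$-disjointness of the wings and the overarching requirement of~(iii) into a clean functional equation in the \emph{periodic} (annular) setting, particularly the bookkeeping for arcs of length exactly $n$. Once that equation is correctly in place, solving it and carrying out the coefficient extraction are delicate but essentially routine.
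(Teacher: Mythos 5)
Your plan follows essentially the same route as the paper: reduce by a factor of $2$ to counting $n$-periodic Ptolemy diagrams with arc lengths at most~$n$, describe these as cyclic (``necklace'') arrangements of wing-Ptolemy diagrams around the mouth, and extract the coefficient of $z^{n}$ by Lagrange inversion --- the paper packages the necklace step as $(C\circ\P)^\bullet$ using the species of cycles, composition and pointing. Your final coefficient extraction is correct and equivalent to the paper's: the paper computes $\T_n = 2\,[z^{n-1}]\frac{(1-z)^{n-1}}{(1-2z-z^2)^n}$, and your $2\,[z^{n-1}](1+z)^{2n-1}(1-2z^2)^{-n}$ is obtained from it by the substitution $z\mapsto z/(1+z)$ inside the Lagrange formula (equivalently, your implicit equation $w(1-2w^2)=x(1+w)^2$ is exactly $Q(v)=x$ with $v=w/(1+w)$).

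Two slips, neither fatal but worth fixing. First, the wing generating function is \emph{not} determined by a quadratic: the decomposition of a Ptolemy diagram with base edge into a degenerate diagram, a triangle, a clique, or an empty cell (each with smaller diagrams glued on) gives $\P(z)=z+x\P(z)^2+(y_1+y_2)\P(z)^3/(1-\P(z))$, which at $x=y_1=y_2=1$ is the cubic $\P^3+2\P^2-(1+z)\P+z=0$; this is the equation your later relation $w(1-2w^2)=x(1+w)^2$ is in fact equivalent to, so the word ``quadratic'' contradicts your own Step~2. Second, your explicit bijection for the factor of $2$ should read $(\mathsf{X},\mathsf{X}^{\perp})\mapsto(\mathsf{X}^{\perp},\Sigma^2\mathsf{X})$, since $2$-Calabi--Yau duality gives $\operatorname{Hom}(\mathsf{X},\mathsf{X}^{\perp})=0 \Leftrightarrow \operatorname{Hom}(\mathsf{X}^{\perp},\Sigma^2\mathsf{X})=0$, and combinatorially $(\mathsf{X}^{\perp})^{\perp}$ corresponds to $\mathsf{nc}\,\mathsf{nc}\,\Sigma^2\mathcal{X}=\Sigma^2\mathcal{X}$, not $\Sigma\mathcal{X}$. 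Finally, Step~2 as written is only a sketch; the species framework of the paper (cycle species composed with $\P$, then pointed, giving $z\,\P'(z)/(1-\P(z))$) is exactly the bookkeeping you flag as the obstacle and is worth adopting rather than reinventing a transfer-matrix argument.
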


In Section \ref{sec:enumerate} we also present refined counts and
determine the number of torsion pairs up to Auslander-Reiten
translation.  Moreover, we show that the set of torsion pairs in the
cluster tube $\mathsf{C}_n$ together with the cyclic group generated
by the Auslander-Reiten translation exhibits a cyclic sieving phenomenon.

The paper is organised as follows. In Section \ref{sec:torsion} 
we review some 
fundamentals on torsion pairs in triangulated categories, building 
on the paper by O.\,Iyama and Y.\,Yoshino \cite{IY}. 
In Section \ref{sec:model} we set the scene by recalling a recent
geometric model introduced by K.\,Baur and R.\,Marsh \cite{BMtube} and
by connecting it to another combinatorial model via arcs in an
$\infty$-gon, as introduced in \cite{HJ}.  
Section \ref{sec:classification} then contains the proofs of our 
main classification results. Finally, in Section \ref{sec:enumerate} 
we prove the enumeration results for torsion pairs in cluster tubes
and explain the cyclic sieving phenomenon.

\section{Torsion theory in triangulated categories}
\label{sec:torsion}

Torsion theory in abelian categories has been introduced
by S. Dickson \cite{Dickson} in 1966. More recently,
this concept has been taken to the world of triangulated 
categories by O. Iyama and Y. Yoshino \cite{IY}. In this section
we briefly review some fundamentals. 

For the rest of the paper we always assume that triangulated categories 
$\mathsf{C}$
are $K$-linear over a fixed field $K$, and
that they are Krull-Schmidt and
$\operatorname{Hom}$-finite (i.e.\ all morphism spaces are finite dimensional). 

Moreover, any subcategory of a triangulated category is always
supposed to be full, and closed under isomorphisms, direct sums
and direct summands.

\begin{Definition}
A {\em torsion pair} in a triangulated
category $\mathsf{C}$ is a pair $(\mathsf{X},\mathsf{Y})$ of  
subcategories such that
\begin{enumerate}
  \item[{(i)}] the morphism space $\operatorname{Hom}_{\mathsf{C}}(x,y)$ 
is zero for all $x \in \mathsf{X}$, $y \in \mathsf{Y}$,
  \item[{(ii)}] each object $c \in \mathsf{C}$ appears in a 
  distinguished triangle $x \rightarrow
  c \rightarrow y \rightarrow \Sigma x$ with $x \in \mathsf{X}$, $y \in \mathsf{Y}$.
\end{enumerate}
\end{Definition}

Examples of such torsion pairs in the triangulated situation 
are given by the t-structures of Beilinson, Bernstein, and Deligne \cite{BBD}
where, additionally, one assumes $\Sigma \mathsf{X} \subseteq \mathsf{X}$, 
and by the co-t-structures of Bondarko \cite{Bondarko} and Pauksztello
\cite{Pauksztello}
where, additionally, one assumes $\Sigma^{-1}\mathsf{X} \subseteq \mathsf{X}$.  
It is not hard to deduce from the definition that a torsion pair $(\mathsf{X},\mathsf{Y})$
is determined by one of its entries, namely one has that 
$$~\mbox{~~~\,\,\,\,\,~~~~~~~}
\mathsf{Y}=\mathsf{X}^{\perp}:=\{\, c \in \mathsf{C} \,|\, 
\operatorname{Hom}_{\mathsf{C}}(x,c) = 0 
\;\mbox{for each}\; x\in \mathsf{X} \,\},
$$
$$\mbox{and~~~}\mathsf{X}={}^{\perp}\mathsf{Y}:=\{\, c \in \mathsf{C} \,|\, 
\operatorname{Hom}_{\mathsf{C}}(c,y) = 0 
\;\mbox{for each}\; y \in \mathsf{Y} \,\}.$$ 
Moreover, as pointed out in \cite[rem.\ after def.\ 2.2]{IY},
for any torsion pair $(\mathsf{X},\mathsf{Y})$, both subcategories are
closed under extensions and $\mathsf{X}$ is contravariantly
finite in $\mathsf{C}$ and $\mathsf{Y}$ is covariantly finite 
in $\mathsf{C}$. 

Then we have the following useful characterisation of torsion
pairs, due to O.\,Iyama and Y.\,Yoshino. 

\begin{Proposition} (\cite[prop.\ 2.3]{IY})
\label{prop:IY-torsion}
Let $\mathsf{X}$ be a contravariantly finite
subcategory of $\mathsf{C}$. 
Then $(\mathsf{X},\mathsf{X}^{\perp})$ is a torsion
pair if and only if $\mathsf{X}={}^{\perp}(\mathsf{X}^{\perp})$.

Similarly, let $\mathsf{Y}$ be a covariantly finite subcategory 
of $\mathsf{C}$. Then $(\,^{\perp}\mathsf{Y},\mathsf{Y})$ 
is a torsion pair if and only 
if $\mathsf{Y}=({}^{\perp}\mathsf{Y})^{\perp}$.
\end{Proposition}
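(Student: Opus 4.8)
\emph{Proof proposal.} The two equivalences are interchanged by passing to the opposite category $\mathsf C^{\opp}$, which swaps $\operatorname{Hom}(a,-)$ with $\operatorname{Hom}(-,a)$, ``contravariantly finite'' with ``covariantly finite'', $(-)^{\perp}$ with ${}^{\perp}(-)$, and a torsion pair $(\mathsf X,\mathsf Y)$ with the torsion pair $(\mathsf Y,\mathsf X)$; so it suffices to prove the first equivalence. The direction ``$\Rightarrow$'' is immediate: if $(\mathsf X,\mathsf X^{\perp})$ is a torsion pair, then the description of the two halves of a torsion pair in terms of one another, recalled just above, gives $\mathsf X={}^{\perp}(\mathsf X^{\perp})$.

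For ``$\Leftarrow$'', assume $\mathsf X$ is contravariantly finite with $\mathsf X={}^{\perp}(\mathsf X^{\perp})$, and check the two axioms of a torsion pair for $(\mathsf X,\mathsf X^{\perp})$. Axiom (i) holds by the very definition of $\mathsf X^{\perp}$. For axiom (ii), fix $c\in\mathsf C$; since $\mathsf C$ is Krull--Schmidt and $\operatorname{Hom}$-finite and $\mathsf X$ is contravariantly finite, there is a \emph{minimal} right $\mathsf X$-approximation $f\colon x\to c$, which I complete to a distinguished triangle
\begin{equation*}
x\xrightarrow{\ f\ }c\xrightarrow{\ g\ }y\xrightarrow{\ h\ }\Sigma x .
\end{equation*}
Then $x\in\mathsf X$, so everything reduces to the claim that $y\in\mathsf X^{\perp}$; granting it, this triangle verifies axiom (ii).

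To prove the claim I would first note that $\mathsf X={}^{\perp}(\mathsf X^{\perp})$ is automatically closed under extensions, since ${}^{\perp}\mathsf Z$ has this property for any class $\mathsf Z$ (apply the homological functor $\operatorname{Hom}_{\mathsf C}(-,z)$, $z\in\mathsf X^{\perp}$, to a triangle whose outer terms lie in $\mathsf X$, and use that those outer $\operatorname{Hom}$'s vanish). Now let $x'\in\mathsf X$ and $\varphi\colon x'\to y$ be arbitrary; the goal is $\varphi=0$. Forming the homotopy pullback of $c\xrightarrow{g}y\xleftarrow{\varphi}x'$ produces an object $p$, a distinguished triangle $x\xrightarrow{\iota}p\xrightarrow{\beta}x'\xrightarrow{h\varphi}\Sigma x$, and a morphism of distinguished triangles from it to $x\xrightarrow{f}c\xrightarrow{g}y\xrightarrow{h}\Sigma x$ which is the identity on the outer terms and is given by morphisms $\alpha\colon p\to c$ and $\varphi\colon x'\to y$ in the middle; in particular $\alpha\iota=f$. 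From the new triangle and extension-closedness of $\mathsf X$ we get $p\in\mathsf X$, so the approximation property of $f$ yields $\alpha=f\gamma$ for some $\gamma\colon p\to x$, whence $f=\alpha\iota=f(\gamma\iota)$; right minimality of $f$ then forces $\gamma\iota$ to be an automorphism of $x$, so $\iota$ is a split monomorphism. But a split monomorphism occurring as the first map of a distinguished triangle makes that triangle split (idempotents split in the Krull--Schmidt category $\mathsf C$), so the connecting map $h\varphi$ of the top triangle vanishes. Finally, applying $\operatorname{Hom}_{\mathsf C}(x',-)$ to the bottom triangle gives an exact sequence $\operatorname{Hom}_{\mathsf C}(x',c)\xrightarrow{g_{*}}\operatorname{Hom}_{\mathsf C}(x',y)\xrightarrow{h_{*}}\operatorname{Hom}_{\mathsf C}(x',\Sigma x)$ in which $g_{*}=0$ because $f$ is a right $\mathsf X$-approximation; hence $h_{*}$ is injective, and $h\varphi=h_{*}(\varphi)=0$ forces $\varphi=0$. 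Thus $y\in\mathsf X^{\perp}$.

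The step I expect to be the main obstacle is exactly the claim $y\in\mathsf X^{\perp}$: the long exact sequence by itself only yields an inclusion $\operatorname{Hom}_{\mathsf C}(x',y)\hookrightarrow\operatorname{Hom}_{\mathsf C}(x',\Sigma x)$, and to see that this group is zero one must genuinely use both the hypothesis $\mathsf X={}^{\perp}(\mathsf X^{\perp})$ (through extension-closedness of $\mathsf X$) and the right \emph{minimality} of the approximation — without minimality the cone $y$ of an arbitrary right $\mathsf X$-approximation need not lie in $\mathsf X^{\perp}$. The supporting ingredients (existence of minimal right approximations, splitting of idempotents, and the homotopy-pullback morphism of triangles) are standard consequences of the running Krull--Schmidt and $\operatorname{Hom}$-finiteness hypotheses, and the dual equivalence follows mechanically by carrying out the same argument in $\mathsf C^{\opp}$.
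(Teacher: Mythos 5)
The paper offers no proof of this proposition: it simply cites \cite[prop.\ 2.3]{IY}, so there is no in-paper argument to compare against. Judged on its own terms, your proof is correct and is essentially the standard one, namely the triangulated analogue of Wakamatsu's lemma. The reduction of the second statement to the first via $\mathsf C^{\opp}$, the \lq\lq only if\rq\rq\ direction from the remark preceding the proposition that $\mathsf X={}^{\perp}\mathsf Y$ in any torsion pair $(\mathsf X,\mathsf Y)$, and the \lq\lq if\rq\rq\ direction built around completing a \emph{minimal} right $\mathsf X$-approximation, forming the homotopy pullback, and using extension-closedness of ${}^{\perp}(\mathsf X^{\perp})$ together with right minimality to force the connecting map $h\varphi$ to vanish, are all exactly the ingredients that this result requires; and you correctly identify that minimality is the crucial point which cannot be omitted.

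Two minor remarks, neither affecting correctness. First, the step \lq\lq a split monomorphism occurring as the first map of a distinguished triangle makes that triangle split\rq\rq\ does not need idempotent splitting: rotating $x\xrightarrow{\iota}p\xrightarrow{\beta}x'\xrightarrow{h\varphi}\Sigma x$ and applying $\operatorname{Hom}_{\mathsf C}(-,x)$ gives exactness of $\operatorname{Hom}(p,x)\xrightarrow{\iota^{*}}\operatorname{Hom}(x,x)\to\operatorname{Hom}(\Sigma^{-1}x',x)$, and surjectivity of $\iota^{*}$ (witnessed by the retraction) forces the image of $1_{x}$ in $\operatorname{Hom}(\Sigma^{-1}x',x)$, namely $\pm\Sigma^{-1}(h\varphi)$, to be zero. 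Second, once $h\varphi=0$ is known you can finish more directly: $h\varphi=0$ means $\varphi$ factors through $g$, say $\varphi=g\psi$ with $\psi\colon x'\to c$, and the approximation property gives $\psi=f\chi$, whence $\varphi=gf\chi=0$ because $gf=0$; this avoids the separate discussion of injectivity of $h_{*}$, which is equivalent but slightly more roundabout.
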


\section{A geometric model for cluster tubes}
\label{sec:model}

A geometric model for the cluster category associated to a tube
has been proposed by K. Baur and R. Marsh in \cite{BMtube}.
Instead of using their annulus model directly we prefer (as the authors
also partially do in \cite{BMtube}) to work 
with an infinite cover of it, namely arcs in an $\infty$-gon. 
The latter has vertices indexed by the integers; an arc
in the $\infty$-gon is a pair $(i,j)$ of integers with 
$j-i\ge 2$, and $i$ and $j$ are called endpoints of the arc. 

It is useful to think of an arc geometrically as
a curve between two integers on the number line as in the 
following picture. 
\[
  \xymatrix @-2.5pc @! {
    \rule{0ex}{6.5ex}\ar@{--}[r]&\ar@{-}[r]&*{\rule{0.1ex}{0.8ex}} \ar@{-}[r]& *{\rule{0.1ex}{0.8ex}} \ar@{-}[r] \ar@/^2pc/@{-}[rrr]& *{\rule{0.1ex}{0.8ex}} \ar@{-}[r] & *{\rule{0.1ex}{0.8ex}} \ar@{-}[r] & *{\rule{0.1ex}{0.8ex}} \ar@{-}[r] & *{\rule{0.1ex}{0.8ex}} \ar@{-}[r] &*{}\ar@{--}[r]&*{}
                    }
\]
This arc model has been extensively used in earlier papers,
see e.g. \cite{Ng}, \cite{HJ}; of particular importance 
is the situation where two arcs cross. 

\begin{Definition} \label{def:crossing}
Two arcs $(i,j)$ and $(k,l)$ of the $\infty$-gon
are said to {\em cross} if either
$i < k < j < l$ or $k < i < l < j$.   
\end{Definition}

This definition precisely corresponds to
the geometric intuition in that two arcs can be drawn as non-crossing
curves if and only if they do not cross in the sense of Definition 
\ref{def:crossing}. Note that two arcs which only
meet in some of their endpoints are not crossing. 

The Auslander-Reiten quivers of the tube category $\mathsf{T}_n$
of rank $n$ and the corresponding cluster tube $\mathsf{C}_{n}$
both have the shape of a cylinder of circumference 
$n$, infinite to one side. More precisely, it is of the form 
$\BZ A_{\infty}/(\tau^n)$ where $\tau$ is the Auslander-Reiten translation.
The indecomposable objects are arranged in the quiver as shown in
Figure 1, where the vertices are identified after $n$ steps. 
We will frequently use the standard coordinate system given in
Figure 1.
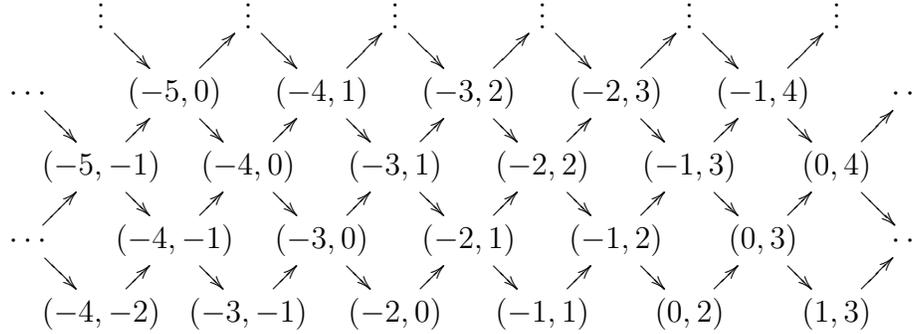
\begin{figure} \label{fig:coordinates}
\[
  \xymatrix @-4.0pc @! {
    & \vdots \ar[dr] & & \vdots \ar[dr] & & \vdots \ar[dr] & & \vdots \ar[dr] & & \vdots \ar[dr] & & \vdots & \\
    \cdots \ar[dr]& & (-5,0) \ar[ur] \ar[dr] & & (-4,1) \ar[ur] \ar[dr] & & (-3,2) \ar[ur] \ar[dr] & & (-2,3) \ar[ur] \ar[dr] & & (-1,4) \ar[ur] \ar[dr] & & \cdots \\
    & (-5,-1) \ar[ur] \ar[dr] & & (-4,0) \ar[ur] \ar[dr] & & (-3,1) \ar[ur] \ar[dr] & & (-2,2) \ar[ur] \ar[dr] & & (-1,3) \ar[ur] \ar[dr] & & (0,4) \ar[ur] \ar[dr] & \\
    \cdots \ar[ur]\ar[dr]& & (-4,-1) \ar[ur] \ar[dr] & & (-3,0) \ar[ur] \ar[dr] & & (-2,1) \ar[ur] \ar[dr] & & (-1,2) \ar[ur] \ar[dr] & & (0,3) \ar[ur] \ar[dr] & & \cdots\\
    & (-4,-2) \ar[ur] & & (-3,-1) \ar[ur] & & (-2,0) \ar[ur] & & (-1,1) \ar[ur] & & (0,2) \ar[ur] & & (1,3) \ar[ur] & \\
               }
\]
\caption{Coordinate system on the AR quiver}
\end{figure}

The Auslander-Reiten translation $\tau$ is on coordinates just given by 
$\tau:(i,j)\mapsto (i-1,j-1)$, i.e.\ for the AR-quiver of $\mathsf{T}_n$
and $\mathsf{C}_n$  
this implies that the coordinates $(i,j)$ and $(i-n,j-n)$ have to be 
identified for all $i,j$, i.e.\ both coordinates simultaneously have
to be taken modulo $n$. 

It is easy to see that there is a bijection between the vertices 
$(i,j)$ in the above coordinate system on the translation quiver $\mathbb{Z}A_{\infty}$ and arcs of the 
$\infty$-gon. 

There is a covering map from the translation quiver $\mathbb{Z}A_{\infty}$
to the AR-quiver of the cluster tube $\mathsf{C}_{n}$ which in terms
of the above coordinate system maps all vertices $(i+rn,j+rn)$, for 
$r\in \mathbb{Z}$, to the same vertex. To distinguish it notationally
we denote this vertex in the AR-quiver by $[(i,j)]$. In other words, 
every vertex $[(i,j)]$ in the AR-quiver of the cluster tube has 
infinitely many lifts $(i+rn,j+rn)$, where $r\in \mathbb{Z}$, 
to $\mathbb{Z}A_{\infty}$. In terms of arcs of the $\infty$-gon 
this means that a vertex $[(i,j)]$ in the AR-quiver of the cluster tube 
has to be identified with the collection of all arcs of the $\infty$-gon
of the form $(i+rn,j+rn)$, where $r\in \mathbb{Z}$.

Noting that full subcategories closed under direct sums and direct summands
are uniquely determined by the set of indecomposable objects they contain,
the following observation follows immediately.  
It will be used throughout the paper without 
further mentioning. 

\begin{Proposition}
Let $\mathsf{C}_n$ be the cluster category associated to a tube of rank $n$. 
Then there are bijections between the following sets: 
\begin{enumerate}
\item[{(i)}] Subcategories $\mathsf{X}$ of $\mathsf{C}_n$. 
\item[{(ii)}] Collections of arcs $\mathcal{X}$ of the $\infty$-gon
which are $n$-periodic, i.e.\ for each arc $(i,j)\in \mathcal{X}$ also
all arcs $(i+rn,j+rn)$ for $r\in \mathbb{Z}$
are in $\mathcal{X}$.
\end{enumerate}
\end{Proposition}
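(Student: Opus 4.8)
The plan is to verify directly that the two assignments described — from a subcategory $\mathsf{X}$ to its set of arcs, and from an $n$-periodic collection of arcs back to a subcategory — are mutually inverse bijections, using only the combinatorial dictionary set up just before the statement. First I would recall the two facts that do all the work. On the one hand, the remark preceding the proposition tells us that a full subcategory of $\mathsf{C}_n$ which is closed under isomorphisms, direct sums and direct summands (our standing convention for subcategories) is determined by, and can be reconstructed from, the set of indecomposable objects it contains. On the other hand, the geometric discussion identifies the indecomposable objects of $\mathsf{C}_n$ with the vertices $[(i,j)]$ of its AR-quiver, and each such vertex with the $\mathbb{Z}$-orbit $\{(i+rn,j+rn)\mid r\in\mathbb{Z}\}$ of an arc of the $\infty$-gon under $\tau$; this orbit identification comes from the covering map $\mathbb{Z}A_\infty\to\mathbb{Z}A_\infty/(\tau^n)$ together with the bijection between vertices of $\mathbb{Z}A_\infty$ and arcs of the $\infty$-gon.

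Granting these, the proof is essentially a chain of bijections. Given a subcategory $\mathsf{X}$, let $\mathcal{X}$ be the union, over all indecomposables $[(i,j)]\in\mathsf{X}$, of the corresponding $\tau$-orbits of arcs; this $\mathcal{X}$ is $n$-periodic by construction, since each orbit is. Conversely, given an $n$-periodic collection $\mathcal{X}$ of arcs, it is a union of $\tau$-orbits, each of which names an indecomposable of $\mathsf{C}_n$; let $\mathsf{X}$ be the full subcategory generated (under the closure operations) by exactly those indecomposables. I would then check the two round trips: starting from $\mathsf{X}$, passing to $\mathcal{X}$, and back recovers the same set of indecomposables (because the orbit-to-vertex correspondence is a bijection on the nose, and a subcategory is recovered from its indecomposables); starting from an $n$-periodic $\mathcal{X}$, passing to $\mathsf{X}$, and back recovers $\mathcal{X}$ (because $n$-periodicity is exactly the condition that $\mathcal{X}$ is saturated with respect to forming full orbits, so nothing is lost or added). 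Both checks are immediate unwindings of the definitions.

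The only point that needs a word of care — and it is the closest thing to an obstacle, though a mild one — is the bookkeeping around "the same arc represented in two ways": a vertex $[(i,j)]$ has infinitely many lifts, so one must make sure the map on collections of arcs is well defined independently of which representative $(i,j)$ one picks for each indecomposable, and that distinct indecomposables give disjoint orbits. This is precisely the content of the covering-map description: two arcs $(i,j)$ and $(i',j')$ lie in the same orbit iff $(i',j')=(i+rn,j+rn)$ for some $r$, iff they represent the same vertex $[(i,j)]=[(i',j')]$. So the orbits partition the arcs of the $\infty$-gon, and the partition blocks are in bijection with the indecomposables of $\mathsf{C}_n$; I would state this explicitly as the hinge of the argument. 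With that in hand there is genuinely nothing left to prove beyond assembling the pieces, and I would keep the write-up to a few lines noting that the claimed bijection is the evident composite $\{\text{subcategories}\}\leftrightarrow\{\text{sets of indecomposables}\}\leftrightarrow\{\text{unions of }\tau\text{-orbits of arcs}\}=\{n\text{-periodic collections of arcs}\}$.
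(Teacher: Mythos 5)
Your proof is correct and follows the same route as the paper, which treats the statement as immediate: a subcategory closed under isomorphisms, direct sums and summands is determined by its set of indecomposables, indecomposables of $\mathsf{C}_n$ correspond to fibers of the covering $\mathbb{Z}A_\infty\to\mathbb{Z}A_\infty/(\tau^n)$, and a union of such fibers is exactly an $n$-periodic collection of arcs. One small terminological slip: the fibers you describe are orbits under the shift $\sigma\colon(i,j)\mapsto(i+n,j+n)$ (equivalently $\tau^n$), not under $\tau$ itself — the $\tau$-orbit of an arc would consist of all arcs of a given length — but since you write the set $\{(i+rn,j+rn)\mid r\in\mathbb{Z}\}$ explicitly, the argument is unaffected.
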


One of the main results in the paper by K.\,Baur and 
R.\,Marsh \cite{BMtube}
is a characterisation of the dimension of extension groups in the
categories $\mathsf{T}_n$ and $\mathsf{C}_n$ in terms of the arc model 
on the $\infty$-gon. For our purposes only the cluster tube situation
matters and we will now restate the relevant results from \cite{BMtube}.

For this, let us denote by $\sigma$ the map on the arcs of the 
$\infty$-gon mapping any arc to its shift by $n$ vertices , i.e.\ 
$\sigma((i,j))=(i+n,j+n)$, and hence $\sigma^r((i,j))=(i+rn,j+rn)$
for all $r\in \mathbb{Z}$.

\begin{Proposition} (\cite[thm.\ 3.8(c)]{BMtube}) \label{prop:BM}
Let $\alpha=[(i,j)]$ and $\beta=[(k,{\ell})]$ 
be indecomposable objects in the cluster tube
$\mathsf{C}_{n}$. Supposing w.l.o.g.\ that for the lengths of the arcs
we have ${\ell}-k\ge j-i$ we 
consider the following cardinalities,
$$I^+:= | \{m\in\mathbb{Z}\,|\,i < k+mn < j\} |
\mbox{~~and~~}
I^-:= | \{m\in\mathbb{Z}\,|\,i < {\ell}+mn < j\} |.
$$
Then we have
$$\dim \operatorname{Ext}^1_{\mathsf{C_n}}(\alpha,\beta) = I^+ + I^- =
\dim \operatorname{Ext}^1_{\mathsf{C_n}}(\beta,\alpha).
$$
\end{Proposition}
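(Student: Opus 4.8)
The plan is to reduce the computation of $\Ext^{1}$ in the cluster tube to Hom-spaces in the abelian category $\mathsf{T}_{n}$, exploiting that $\mathsf{T}_{n}$ is hereditary, and then to carry out the arc bookkeeping. First I would put $F=\tau^{-1}\circ\Sigma$, so that $\mathsf{C}_{n}=D^{b}(\mathsf{T}_{n})/F$, and use the standard description of morphisms in the orbit category together with $F^{p}(\Sigma\beta)=(\tau^{-p}\beta)[p+1]$:
$$
\Ext^{1}_{\mathsf{C}_{n}}(\alpha,\beta)=\Hom_{\mathsf{C}_{n}}(\alpha,\Sigma\beta)=\bigoplus_{p\in\mathbb{Z}}\Hom_{D^{b}(\mathsf{T}_{n})}\big(\alpha,\,(\tau^{-p}\beta)[p+1]\big).
$$
Since $\mathsf{T}_{n}$ is hereditary, only the terms $p=-1$ and $p=0$ are nonzero, which gives $\Ext^{1}_{\mathsf{C}_{n}}(\alpha,\beta)\cong\Hom_{\mathsf{T}_{n}}(\alpha,\tau\beta)\oplus\Ext^{1}_{\mathsf{T}_{n}}(\alpha,\beta)$.

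Next, as $\mathsf{T}_{n}$ is a hereditary category with no nonzero projectives or injectives, Serre duality applies and yields $\Ext^{1}_{\mathsf{T}_{n}}(\alpha,\beta)\cong\operatorname{D}\Hom_{\mathsf{T}_{n}}(\beta,\tau\alpha)$, so that
$$
\dim\Ext^{1}_{\mathsf{C}_{n}}(\alpha,\beta)=\dim\Hom_{\mathsf{T}_{n}}(\alpha,\tau\beta)+\dim\Hom_{\mathsf{T}_{n}}(\beta,\tau\alpha).
$$
The right-hand side is visibly symmetric in $\alpha$ and $\beta$, so this already delivers the second asserted equality $\dim\Ext^{1}_{\mathsf{C}_{n}}(\alpha,\beta)=\dim\Ext^{1}_{\mathsf{C}_{n}}(\beta,\alpha)$; what remains is to match the two summands with $I^{+}$ and $I^{-}$.

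For this I would first record the combinatorial description of Hom-spaces in $\mathsf{T}_{n}$. Every indecomposable of $\mathsf{T}_{n}$ is uniserial, and from the shape of the Auslander--Reiten quiver one reads off that the submodules of $[(i',j')]$ are exactly the $[(i',m)]$ with $i'+2\le m\le j'$, and its nonzero quotients exactly the $[(m,j')]$ with $i'\le m\le j'-2$. A nonzero morphism between indecomposables factors as a surjection onto its image followed by an inclusion, and a standard computation with the uniserial structure (keeping track of the multiplicities coming from the radical filtration, which is where objects of quasi-length $\ge n$ contribute more than once) shows that $\dim\Hom_{\mathsf{T}_{n}}\big([(i',j')],[(k',\ell')]\big)$ is the number of integers $t$ for which $[(k'+tn,\,j'+tn)]$ is at once a quotient of $[(i',j')]$ and a submodule of $[(k',\ell')]$, that is,
$$
\dim\Hom_{\mathsf{T}_{n}}\!\big([(i',j')],[(k',\ell')]\big)=\bigl|\{\,t\in\mathbb{Z}\mid i'\le k'+tn\le j'-2\ \text{and}\ k'+2\le j'-tn\le\ell'\,\}\bigr|.
$$

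Finally I would substitute $\tau\alpha=[(i-1,j-1)]$ and $\tau\beta=[(k-1,\ell-1)]$ into the two Hom-terms and simplify: a direct index manipulation turns $\dim\Hom_{\mathsf{T}_{n}}(\alpha,\tau\beta)$ into $\bigl|\{\,m\mid\max(i,\,j+k-\ell)<k+mn<j\,\}\bigr|$ and $\dim\Hom_{\mathsf{T}_{n}}(\beta,\tau\alpha)$ into $\bigl|\{\,m\mid i<\ell+mn<\min(j,\,i+\ell-k)\,\}\bigr|$, and the normalisation $\ell-k\ge j-i$ is precisely what forces $j+k-\ell\le i$ and $i+\ell-k\ge j$, so that the superfluous bounds drop out and these become $I^{+}$ and $I^{-}$ respectively. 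The main obstacle is this last step together with the Hom-formula in $\mathsf{T}_{n}$: one must correctly handle the wrapping of indecomposables of quasi-length $\ge n$ around the tube — which is exactly what makes the Hom-dimensions and the cardinalities $I^{\pm}$ genuinely larger than $1$ — and the hypothesis $\ell-k\ge j-i$ is the device that keeps the bookkeeping under control and the answer in the clean shape $I^{+}+I^{-}$.
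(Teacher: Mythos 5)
This statement is not proved in the paper: it is quoted verbatim from Baur and Marsh \cite[thm.\ 3.8(c)]{BMtube}, so there is no ``paper's own proof'' to compare against. Your argument is a legitimate independent derivation, and the overall structure is sound and presumably close to the one in \cite{BMtube}: write $\Hom_{\mathsf{C}_n}(\alpha,\Sigma\beta)$ as the direct sum over the $F$-orbit, use heredity to keep only the summands $p=-1,0$, convert $\Ext^1_{\mathsf{T}_n}(\alpha,\beta)$ into $D\Hom_{\mathsf{T}_n}(\beta,\tau\alpha)$ by the Auslander--Reiten/Serre formula (valid in $\mathsf{T}_n$ since there are no projectives or injectives), and then count morphisms in the tube by common epi-mono factors. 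I checked the final bookkeeping: with $i'=i$, $j'=j$, $k'=k-1$, $\ell'=\ell-1$ the two inequalities in your Hom-formula collapse to $i<k+tn<j$ once $\ell-k\ge j-i$ forces $j+k-\ell\le i$, and symmetrically the other term collapses to $i<\ell+mn<j$, so the two summands are exactly $I^+$ and $I^-$.

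One small slip to flag: the parenthetical description ``the number of integers $t$ for which $[(k'+tn,j'+tn)]$ is at once a quotient of $[(i',j')]$ and a submodule of $[(k',\ell')]$'' cannot be taken literally, because the tube object $[(k'+tn,j'+tn)]$ does not depend on $t$ (it is always $[(k',j')]$). What you actually count, and what the displayed formula correctly encodes, is the set of $t$ such that $(k'+tn,\,j')$ is a lifted quotient of $(i',j')$ and its shift $(k',\,j'-tn)$ is a lifted submodule of $(k',\ell')$; distinct $t$ give factors of distinct quasi-length, hence distinct objects $L_t$ and linearly independent composites $M\twoheadrightarrow L_t\hookrightarrow N$. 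With that rephrasing the Hom-count is the standard dimension formula for tubes (e.g.\ for $n=1$ it returns $\min(a,b)$ for two Jordan blocks $J_a,J_b$, as it should), and the rest of the proof goes through.
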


\begin{Remark} \label{rem:ext1}
\begin{enumerate}
\item[{(1)}] 
The cluster tube $\mathsf{C}_{n}$ is a 2-Calabi-Yau category and 
hence the functor $\operatorname{Ext}_{\mathsf{C}_n}^1$ is 
symmetric in the two arguments, up to vector space duality. 
In particular, the dimensions occurring in the above proposition 
agree and the assumption ${\ell}-k\ge j-i$ there  
(and in \cite[thm.\ 3.8(c)]{BMtube}) is not a restriction. 
\item[{(2)}] Note that $I^+ + I^-> 0$ if and only if some shifted arc 
$\sigma^m((k,{\ell}))$ of the $\infty$-gon crosses the arc $(i,j)$.
(This uses the assumption that ${\ell}-k\ge j-i$.)
\item[{(3)}] An indecomposable object $\alpha=[(i,j)]$ in 
$\mathsf{C}_n$ is rigid (i.e.\ 
$\operatorname{Ext}^1_{\mathsf{C}_n}(\alpha,\alpha)=0$) if and only if 
$j-i\le n$. In fact, this is exactly the condition for which no shift of 
$\alpha$ by a multiple of $n$ crosses $\alpha$. 
\end{enumerate}
\end{Remark}

We now characterise $\operatorname{Ext}^1$-vanishing in the cluster tube
$\mathsf{C}_n$ combinatorially
in terms of the arc model. 

For any collection $\mathcal{X}$ of arcs of the $\infty$-gon 
we denote by $\mathsf{nc}\,\mathcal{X}$ the collection of all arcs
of the $\infty$-gon which do not cross any arc from $\mathcal{X}$. 

\begin{Proposition} \label{prop:Ext1-nc} 
Let $\mathsf{X}$ be a subcategory of $\mathsf{C}_n$ and let 
$\mathcal{X}$ be the
corresponding $n$-periodic collection of arcs of the $\infty$-gon. For any indecomposable
object $y\in \mathsf{C}_n$ and corresponding set of arcs 
$\bar{y}$ of the 
$\infty$-gon the following are equivalent:
\begin{enumerate}
\item[{(i)}] 
$\operatorname{Ext}_{\mathsf{C}_n}^1(x,y)=0$ for all $x\in \mathsf{X}$.
\item[{(ii)}]
$\bar{y}\subseteq \mathsf{nc}\,\mathcal{X}.$
\end{enumerate}
In other words, the collection of arcs 
$\mathsf{nc}\,\mathcal{X}$ of the $\infty$-gon corresponds to the subcategory 
$\{y\in \mathsf{C}_n\,|\,\operatorname{Ext}_{\mathsf{C}_n}^1(x,y)=0
\mbox{~~for all $x\in \mathsf{X}$}\}$
of $\mathsf{C}_n$. 
\end{Proposition}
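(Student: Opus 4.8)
The plan is to deduce Proposition \ref{prop:Ext1-nc} directly from Baur and Marsh's $\operatorname{Ext}^1$-formula (Proposition \ref{prop:BM}) together with Remark \ref{rem:ext1}(2). The statement is an ``elementwise'' characterisation, so it suffices to prove, for a single indecomposable $x\in\mathsf{X}$ with corresponding set of arcs $\bar x$ and the given indecomposable $y$ with arc-set $\bar y$, that $\operatorname{Ext}^1_{\mathsf{C}_n}(x,y)=0$ if and only if no arc in $\bar y$ crosses any arc in $\bar x$; intersecting over all $x\in\mathsf{X}$ then yields the equivalence of (i) and (ii), since $\mathsf{nc}\,\mathcal{X}$ is by definition the set of arcs crossing no arc of $\mathcal{X}$, and $\mathcal{X}=\bigcup_{x\in\mathsf{X}}\bar x$.

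The key step is the single-object statement. Write $x=[(i,j)]$ and $y=[(k,\ell)]$, and assume without loss of generality (using Remark \ref{rem:ext1}(1), the $2$-Calabi--Yau symmetry, to swap the roles if necessary) that $\ell-k\ge j-i$. By Proposition \ref{prop:BM}, $\dim\operatorname{Ext}^1_{\mathsf{C}_n}(x,y)=I^++I^-$ with $I^\pm$ as defined there. Hence $\operatorname{Ext}^1_{\mathsf{C}_n}(x,y)=0$ if and only if $I^++I^-=0$, and by Remark \ref{rem:ext1}(2) this happens if and only if \emph{no} shifted arc $\sigma^m((k,\ell))=(k+mn,\ell+mn)$ crosses the arc $(i,j)$. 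Now recall from Section \ref{sec:model} that $\bar x$ is precisely the set of all arcs $(i+rn,j+rn)$, $r\in\mathbb Z$, and $\bar y$ is the set of all arcs $(k+mn,\ell+mn)$, $m\in\mathbb Z$; and crossing is translation-invariant, i.e.\ $(a,b)$ crosses $(c,d)$ if and only if $(a+n,b+n)$ crosses $(c+n,d+n)$. Therefore ``some shift of $(k,\ell)$ crosses $(i,j)$'' is equivalent to ``some arc of $\bar y$ crosses some arc of $\bar x$'', which is exactly the negation of $\bar y\subseteq\mathsf{nc}\,\bar x$.

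Assembling: $\operatorname{Ext}^1_{\mathsf{C}_n}(x,y)=0 \iff$ no arc of $\bar y$ crosses any arc of $\bar x$ $\iff \bar y\subseteq\mathsf{nc}\,\bar x$. Taking $x$ over all indecomposables of $\mathsf{X}$, condition (i) says this holds for every such $x$, which is equivalent to $\bar y\subseteq\bigcap_{x}\mathsf{nc}\,\bar x=\mathsf{nc}\,\big(\bigcup_x\bar x\big)=\mathsf{nc}\,\mathcal{X}$, i.e.\ condition (ii). The final ``in other words'' sentence is then immediate: an indecomposable $y$ lies in the subcategory $\{\,y\mid\operatorname{Ext}^1_{\mathsf{C}_n}(x,y)=0\text{ for all }x\in\mathsf{X}\,\}$ if and only if $\bar y\subseteq\mathsf{nc}\,\mathcal{X}$, and since subcategories are determined by their indecomposables this subcategory corresponds under the bijection of the earlier Proposition to the $n$-periodic collection $\mathsf{nc}\,\mathcal{X}$ (one should note in passing that $\mathsf{nc}\,\mathcal{X}$ is indeed $n$-periodic, again by translation-invariance of crossing, so that it does correspond to a genuine subcategory).

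The only mild subtlety---and the step I would be most careful about---is the reduction to a single pair of arcs and the bookkeeping of which arc is ``longer'': Proposition \ref{prop:BM} is stated only under the hypothesis $\ell-k\ge j-i$, so one must invoke the $2$-Calabi--Yau symmetry of Remark \ref{rem:ext1}(1) to handle the opposite case, and one must check that the resulting crossing condition is genuinely symmetric in $x$ and $y$ (it is, by Definition \ref{def:crossing}), so that the conclusion $\bar y\subseteq\mathsf{nc}\,\bar x$ does not secretly depend on which of the two arcs was assumed longer. Everything else is a direct unwinding of the definitions of $\bar x$, $\bar y$, $\sigma$, and $\mathsf{nc}$.
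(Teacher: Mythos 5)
Your proof is correct and takes essentially the same route as the paper's: both directions reduce to Proposition~\ref{prop:BM} together with Remark~\ref{rem:ext1}, using the $n$-periodicity of $\bar x$, $\bar y$, and $\mathcal{X}$ (equivalently, translation-invariance of crossing) to pass between ``some shift of one arc crosses the other'' and ``some arc of $\bar y$ crosses some arc of $\mathcal{X}$.'' The extra care you take over the WLOG step $\ell-k\ge j-i$ via the $2$-Calabi--Yau symmetry is exactly what Remark~\ref{rem:ext1}(1) is there for, and the paper's proof invokes it implicitly in the same way.
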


\begin{proof} First let $y\in \mathsf{C}_n$ be indecomposable such that 
$\operatorname{Ext}_{\mathsf{C}_n}^1(x,y)=0$ for all $x\in \mathsf{X}$, and let 
$\bar{y}$ be the corresponding set of arcs (all shifts of one of the arcs by 
multiples of $n$).  Then by Proposition \ref{prop:BM} and the remark
following it we have that
no arc from $\bar{y}$ crosses any shift of any arc from $\mathcal{X}$.
In particular, it does not cross any arc from $\mathcal{X}$ itself,
i.e.\ $\bar{y}\subseteq \mathsf{nc}\,\mathcal{X}$.

Conversely, let $\bar{y}\subseteq \mathsf{nc}\,\mathcal{X}$ be the 
set of arcs corresponding to an indecomposable object $y$ in $\mathsf{C}_n$. 
Take any indecomposable $x\in \mathsf{X}$ with corresponding set of arcs
$\bar{x}\subseteq \mathcal{X}$. 
Since $\bar{y}\subseteq \mathsf{nc}\,\mathcal{X}$ by assumption,
no arc in $\bar{y}$ crosses any arc in $\bar{x}$. Note that both
$\bar{x}$ and $\bar{y}$ are $n$-periodic by definition, so 
Proposition \ref{prop:BM} and Remark \ref{rem:ext1}
imply that $\operatorname{Ext}_{\mathsf{C}_n}^1(x,y)=0$, as claimed.
\end{proof}

We recall the following definition from \cite{Ng}; see also \cite{HJR-Ptolemy} for a finite version, which will be recalled 
in Section \ref{sec:enumerate} below.

\begin{Definition} 
\label{def:Ptolemy}
A collection $\mathcal{X}$ of arcs of the $\infty$-gon is 
called a {\em Ptolemy diagram} if the following condition is satisfied:
if $(i,j)$ and $(r,s)$ are arcs in $\mathcal{X}$ which cross,
w.l.o.g.\ $i<r$, then
all arcs among the pairs $(i,r)$, $(i,s)$, $(r,j)$, $(j,s)$ 
must also be in $\mathcal{X}$. 
\end{Definition}

\begin{Remark} \label{rem:nc-Ptolemy}
For any collection $\mathcal{X}$ of arcs of the $\infty$-gon,
$\mathsf{nc}\,\mathcal{X}$ is a Ptolemy diagram. 
In fact, suppose $(i,j)$ and $(r,s)$ are crossing 
arcs in $\mathsf{nc}\,\mathcal{X}$ with $i<r$. 
We have to show that the arcs
$(i,r)$, $(i,s)$, $(r,j)$, $(j,s)$ are also in $\mathsf{nc}\,\mathcal{X}$.
This immediately follows from the observation that any arc of the
$\infty$-gon crossing one of $(i,r)$, $(i,s)$, $(r,j)$, $(j,s)$
must cross at least one of $(i,j)$ and $(r,s)$. Since $(i,j)$ and 
$(r,s)$ are in $\mathsf{nc}\,\mathcal{X}$ by assumption this means 
that no arc from $\mathcal{X}$ can cross either of
$(i,r)$, $(i,s)$, $(r,j)$, $(j,s)$, i.e. these are in 
$\mathsf{nc}\,\mathcal{X}$, as claimed.
\end{Remark}

\begin{Proposition} \label{prop:Ptolemy}
Let $\mathsf{X}$ be a subcategory of the cluster tube $\mathsf{C}_n$, 
and let $\mathcal{X}$
be the corresponding $n$-periodic collection of arcs of the $\infty$-gon. 
Then the following conditions are equivalent:
\begin{enumerate}
\item[{(i)}] $\mathsf{X}=\,^{\perp}(\mathsf{X}^{\perp})$; 
\item[{(ii)}] $\mathcal{X} = \mathsf{nc}\,\mathsf{nc}\,\mathcal{X}$;
\item[{(iii)}] $\mathcal{X}$ is an $n$-periodic Ptolemy diagram.
\end{enumerate}
\end{Proposition}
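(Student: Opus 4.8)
The plan is to prove the cycle of equivalences $(i)\Leftrightarrow(ii)\Leftrightarrow(iii)$ by establishing $(i)\Leftrightarrow(ii)$ as a direct translation across the arc dictionary, and then $(ii)\Leftrightarrow(iii)$ as a purely combinatorial statement about collections of arcs. The $2$-Calabi--Yau property is what makes the first equivalence clean: since $\mathsf{C}_n$ is $2$-CY, vanishing of $\operatorname{Hom}$ and vanishing of $\operatorname{Ext}^1$ are interchangeable up to the shift $\Sigma$, and $\operatorname{Ext}^1$-vanishing has just been characterised combinatorially in Proposition \ref{prop:Ext1-nc}. The key observation is that $\operatorname{Hom}_{\mathsf{C}_n}(x,c)=0$ for all $x\in\mathsf{X}$ is equivalent, by $2$-CY duality (so $\operatorname{Hom}(x,c)\cong D\operatorname{Hom}(c,\Sigma^2 x)\cong D\operatorname{Ext}^1(\Sigma^{-1}c, x)$, say), to an $\operatorname{Ext}^1$-vanishing condition on the arcs of $c$ against a suitable shift of $\mathcal{X}$. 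Because $\mathcal{X}$ is $n$-periodic and the shift $\sigma$ (shift by $n$) acts trivially on objects of $\mathsf{C}_n$, Proposition \ref{prop:Ext1-nc} and Remark \ref{rem:ext1}(2) tell us that $\mathsf{X}^\perp$ corresponds precisely to $\mathsf{nc}\,\mathcal{X}$, and then iterating, $\,^\perp(\mathsf{X}^\perp)$ corresponds to $\mathsf{nc}\,\mathsf{nc}\,\mathcal{X}$. Hence $(i)$, which says $\mathsf{X}=\,^\perp(\mathsf{X}^\perp)$, translates into $\mathcal{X}=\mathsf{nc}\,\mathsf{nc}\,\mathcal{X}$, which is $(ii)$.

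For $(ii)\Rightarrow(iii)$: if $\mathcal{X}=\mathsf{nc}\,\mathsf{nc}\,\mathcal{X}$, then $\mathcal{X}$ is of the form $\mathsf{nc}\,\mathcal{Y}$ for $\mathcal{Y}=\mathsf{nc}\,\mathcal{X}$, and by Remark \ref{rem:nc-Ptolemy} every collection of the form $\mathsf{nc}\,\mathcal{Y}$ is a Ptolemy diagram; $n$-periodicity is part of the standing hypothesis on $\mathcal{X}$. So this direction is immediate from what is already in the excerpt.

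The real content is $(iii)\Rightarrow(ii)$, and I expect this to be the main obstacle. One always has $\mathcal{X}\subseteq\mathsf{nc}\,\mathsf{nc}\,\mathcal{X}$ (an arc of $\mathcal{X}$ crosses nothing in $\mathsf{nc}\,\mathcal{X}$ by definition), so the task is the reverse inclusion: assuming $\mathcal{X}$ is Ptolemy, every arc $(a,b)$ that crosses no arc of $\mathsf{nc}\,\mathcal{X}$ must already lie in $\mathcal{X}$. The plan is a contrapositive/induction argument on, say, the length $b-a$ of the arc: suppose $(a,b)\notin\mathcal{X}$; we want to produce an arc $(c,d)\in\mathsf{nc}\,\mathcal{X}$ that crosses $(a,b)$. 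The natural candidate is to look at the arcs of $\mathcal{X}$ that cross $(a,b)$ (if none cross it, then $(a,b)$ itself is noncrossing with respect to $\mathcal{X}$, so $(a,b)\in\mathsf{nc}\,\mathcal{X}$, and we can take $(c,d)=(a,b)$ unless it fails to cross itself — so in that degenerate case one argues $(a,b)$ separates $\mathcal{X}$ into two pieces and builds a crossing arc "through the gap"), and use the Ptolemy condition to show that the ``innermost'' such crossing arcs, together with the endpoints of $(a,b)$, allow one to construct a short noncrossing arc straddling $a$ or $b$. Concretely: among arcs of $\mathcal{X}$ crossing $(a,b)$ from the left (i.e. with one endpoint $\le a$), the Ptolemy closure forces a maximal ``fan'' structure, and one extracts from it an arc with an endpoint at or just past $a$ that, if it were crossed by some arc $(r,s)\in\mathcal{X}$, would — again via the Ptolemy condition applied to that arc and the fan — force $(a,b)$ itself into $\mathcal{X}$, a contradiction. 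The bookkeeping here — handling the two sides of $(a,b)$, the endpoints, the $n$-periodicity (one must make sure the constructed arc and its shifts are consistent, but since $\mathsf{nc}$ of an $n$-periodic collection is automatically $n$-periodic this is not serious), and the degenerate cases where no arc of $\mathcal{X}$ crosses $(a,b)$ on one or both sides — is where the argument needs care, and is essentially the same kind of combinatorial analysis used for the $A_\infty$ case in \cite{Ng}; I would adapt that argument, taking advantage of the fact that here the bound on arc lengths plays no role (that constraint enters only in the Theorem, not in this Proposition).
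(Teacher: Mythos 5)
The direction $(i)\Leftrightarrow(ii)$ matches the paper's argument (translate both perps via Proposition~\ref{prop:Ext1-nc}, note that the shifts $\Sigma$ and $\Sigma^{-1}$ cancel because $\Sigma$ commutes with $\mathsf{nc}$). Your route for $(ii)\Rightarrow(iii)$ is actually slightly cleaner than the paper's: you observe directly that $\mathcal{X}=\mathsf{nc}(\mathsf{nc}\,\mathcal{X})$ exhibits $\mathcal{X}$ as $\mathsf{nc}$ of something, hence Ptolemy by Remark~\ref{rem:nc-Ptolemy}, whereas the paper appeals to Ng's Lemma~3.17 even for this implication. That part is fine.

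There is, however, a genuine gap in your plan for $(iii)\Rightarrow(ii)$, and it is exactly where you wave at ``the degenerate case where no arc of $\mathcal{X}$ crosses $(a,b)$'' and where you declare that $n$-periodicity ``is not serious.'' In the $A_\infty$ setting without periodicity, the Ptolemy condition alone does \emph{not} imply $\mathcal{X}=\mathsf{nc}\,\mathsf{nc}\,\mathcal{X}$: Ng's characterisation requires a second condition on so-called left and right fountains, and there exist Ptolemy diagrams of the $\infty$-gon violating it. Your contrapositive sketch --- given $(a,b)\notin\mathcal{X}$, produce an arc of $\mathsf{nc}\,\mathcal{X}$ crossing $(a,b)$ --- would necessarily fail on such examples, and the failure is hidden precisely in the subcase you set aside: if $(a,b)$ is crossed by no arc of $\mathcal{X}$, one cannot in general ``build a crossing arc through the gap'' that also avoids $\mathcal{X}$. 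The paper does not re-derive Ng's result; it invokes Ng's Lemma~3.17 and then uses $n$-periodicity in an essential, non-trivial way to show that the fountain obstruction cannot occur for an $n$-periodic Ptolemy diagram (if $i$ is a left fountain, $n$-periodicity produces a long arc ending at $i$ which must be crossed by shifts of arcs emanating from the right fountain $j$, and the Ptolemy closure then forces $i$ to be a right fountain too). So the periodicity is not bookkeeping; it is the content. If you want a self-contained argument instead of citing Ng, you need to either reproduce Ng's full analysis (Ptolemy + no left-fountain-without-matching-right-fountain $\Rightarrow$ $\mathsf{nc}\,\mathsf{nc}$-closed) and then add the periodicity argument, or else build a direct inductive argument that uses periodicity from the outset --- but the sketch as written does neither and would not survive the degenerate case.
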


\begin{proof}
For the equivalence of (i) and (ii) we observe the following. By 
Proposition \ref{prop:Ext1-nc} the subcategory 
\begin{eqnarray*}
\mathsf{X}^{\perp} & = & \{y\in\mathsf{C}_n\,|\,
\operatorname{Hom}_{\mathsf{C}_n}(x,y)=0
\mbox{~~for all $x\in \mathsf{X}$}\} \\
& = & \{y\in\mathsf{C}_n\,|\,
\operatorname{Ext}_{\mathsf{C}_n}^1(x,\Sigma^{-1}y)=0
\mbox{~~for all $x\in \mathsf{X}$}\} 
\end{eqnarray*}
corresponds to the collection of arcs $\Sigma\, \mathsf{nc}\,\mathcal{X}$.
Similarly, $~^{\perp}\mathsf{X}$ corresponds to 
$\Sigma^{-1}\,\mathsf{nc}\,\mathcal{X}$.

Taken together we have 
$\mathsf{X}=\,^{\perp}(\mathsf{X}^{\perp})$ if and only if 
$$\mathcal{X} = \Sigma^{-1}\, \mathsf{nc}\,
(\Sigma\, \mathsf{nc}\,\mathcal{X}) = \mathsf{nc}\,\mathsf{nc}\,\mathcal{X}
$$
where the latter equation holds because $\Sigma$ and $\mathsf{nc}$
commute (since $\mathsf{C}_n$ is 2-Calabi-Yau, the suspension 
$\Sigma$ equals the AR translation $\tau$ and is thus just a shift by 1 
on the AR quiver). 

The implication (ii) $\Longrightarrow$ (iii) immediately follows 
from \cite[lem.\ 3.17]{Ng}; one just has to observe that condition 
(i) in \cite[def.\ 0.3]{Ng} is precisely our Ptolemy condition
as given in Definition \ref{def:Ptolemy}.

Conversely, suppose (iii) holds, i.e.\ $\mathcal{X}$ is an
$n$-periodic Ptolemy diagram. For deducing that 
$\mathcal{X}=\mathsf{nc}\,\mathsf{nc}\,\mathcal{X}$
we can again use \cite[lem.\ 3.17]{Ng}; hence we have to show 
that condition (ii) of \cite[def.\ 0.4]{Ng} does not occur
in our context. In fact, let a vertex $i$ be a left fountain
and a vertex $j$ with $j-i\ge 2$ be a right fountain of $\mathcal{X}$, 
i.e.\ there are infinitely many arcs in $\mathcal{X}$ of the form 
$(k,i)$ and infinitely many arcs in $\mathcal{X}$
of the form $(j,{\ell})$. 
Among the former pick an arc $(k,i)$ with $i-k\ge n+1$. By $n$-periodicity
all shifts by multiples of $n$ of the infinitely many arcs of the 
form $(j,{\ell})$ are also in $\mathcal{X}$. Since $i-k\ge n+1$, there are 
infinitely many of the arcs $(j,{\ell})$ for which some shift by a multiple
of $n$ crosses
the arc $(k,i)$. But since $\mathcal{X}$ is a Ptolemy diagram by 
assumption, we conclude that $i$ must also be a right fountain. 
This shows that condition (ii) of \cite[def.\ 0.4]{Ng} is obsolete
and hence the implication (iii) $\Longrightarrow$ (ii) in our
proposition also 
follows from \cite[lem.\ 3.17]{Ng}. 
\end{proof}

Recall from Figure 1 the coordinate system on 
the AR quiver 
of $\mathsf{C}_n$. If a vertex of the AR quiver has coordinates
$[(i,j)]$ we call the natural number $j-i-1$ the {\em level} of the 
vertex. Note that the mouth of the cylinder (i.e. the bottom line
of the AR quiver) contains the vertices
of level 1.

\begin{Proposition} \label{prop:finite-infinite}
Let $\mathsf{X}$ be a subcategory of the cluster tube $\mathsf{C}_n$, 
and let $\mathcal{X}$
be the corresponding $n$-periodic collection of arcs of the $\infty$-gon. 

Suppose that $\mathsf{X}=\,^{\perp}(\mathsf{X}^{\perp})$, or
equivalently that $\mathcal{X}$ is an $n$-periodic Ptolemy diagram.

Then precisely one of the following situations occurs:
\begin{enumerate}
\item[{(i)}] $\mathsf{X}$ has only finitely many indecomposable objects, all
of level $\le n-1$,
and $\mathsf{X}^{\perp}$ contains infinitely many indecomposable objects.
\item[{(ii)}] $\mathsf{X}$ has infinitely many indecomposable objects,
and $\mathsf{X}^{\perp}$ contains only finitely many indecomposable objects,
all of level $\le n-1$. 
\end{enumerate}
\end{Proposition}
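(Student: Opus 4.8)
The plan is to argue directly with the $n$-periodic Ptolemy diagram $\mathcal{X}$ and to translate everything back and forth to $\mathsf{C}_n$ via the established dictionary (Proposition~\ref{prop:Ptolemy} and Proposition~\ref{prop:Ext1-nc}). Note first that the two alternatives (i) and (ii) are mutually exclusive, since $\mathsf{X}\cap\mathsf{X}^{\perp}=0$ forces at least one of $\mathsf{X}$, $\mathsf{X}^{\perp}$ to be finite only if the other is not contained in it — more precisely, if $\mathsf{X}$ has infinitely many indecomposables then $\mathsf{X}^{\perp}$ cannot, as we shall see. The real content is therefore: (a) dichotomy — at least one of the two is finite; (b) whenever one of them is finite, all of its indecomposables have level $\le n-1$; (c) when one is finite the other is infinite, so "finite" and "infinite" genuinely partition the two entries.

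First I would prove the bound in (b): if $\mathsf{X}$ contains an indecomposable $[(i,j)]$ of level $\ge n$, i.e.\ $j-i\ge n+1$, then by Remark~\ref{rem:ext1}(3) the arc $(i,j)$ is crossed by its own shift $\sigma^{\pm1}((i,j))$, hence by an arc of $\mathcal{X}$, so $(i,j)\notin\mathsf{nc}\,\mathsf{nc}\,\mathcal{X}$ would fail — wait, rather the point is that $[(i,j)]$ is then \emph{not rigid}, and I claim $\mathsf{X}$ must in fact be infinite in this case. Concretely: if $(i,j)\in\mathcal{X}$ with $j-i\ge n+1$, apply the Ptolemy condition to $(i,j)$ and its shift $(i+n,j+n)$ (which cross, since $i<i+n<j<j+n$ using $j-i\ge n+1$, hence $j>i+n$); the Ptolemy closure then forces arcs $(i,i+n)$, $(i,j+n)$, $(i+n,j)$, $(j,j+n)$ into $\mathcal{X}$, and one of these, say $(i+n,j)$, still has length $j-(i+n)$; more useful is that $(i,j+n)$ has length $\ge 2n+1$, and iterating produces arbitrarily long arcs sharing endpoint $i$ (mod $n$), i.e.\ $i$ and $j$ modulo $n$ become fountains, so $\mathsf{X}$ has infinitely many indecomposables. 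Thus: \emph{every indecomposable in a finite entry has level $\le n-1$} — indeed if $\mathsf{X}$ is finite it contains no arc of length $\ge n+1$, and length exactly $n$ would give a rigid but not "too long" arc, so the level bound $j-i-1\le n-1$ is what we get; I should double-check the length-$n$ boundary case, which is the fiddly point. Since $\mathsf{X}^{\perp}=\,^{\perp}(\mathsf{X}^{\perp})$'s perp is again of the same form (it corresponds to $\Sigma\,\mathsf{nc}\,\mathcal{X}$, an $n$-periodic Ptolemy diagram by Remark~\ref{rem:nc-Ptolemy} and $n$-periodicity), the same bound applies to $\mathsf{X}^{\perp}$ when it is finite.

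For the dichotomy (a) I would argue by contradiction: suppose both $\mathcal{X}$ and $\Sigma\,\mathsf{nc}\,\mathcal{X}$ (the diagram of $\mathsf{X}^{\perp}$) were infinite, equivalently both $\mathcal{X}$ and $\mathsf{nc}\,\mathcal{X}$ are infinite. Because everything is $n$-periodic, "infinitely many arcs" means "arbitrarily long arcs" (a bounded-length $n$-periodic collection is finite), so pick $(i,j)\in\mathcal{X}$ with $j-i$ very large and $(k,l)\in\mathsf{nc}\,\mathcal{X}$ with $l-k$ very large. If the lengths are large enough relative to $n$, then by an $n$-shift one can always make a copy of one of these arcs cross the other — this is exactly the mechanism already used in the proof of Proposition~\ref{prop:Ptolemy} for the fountain argument — contradicting $(k,l)\in\mathsf{nc}\,\mathcal{X}$. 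Hence at least one of $\mathcal{X}$, $\mathsf{nc}\,\mathcal{X}$ is finite; equivalently at least one of $\mathsf{X}$, $\mathsf{X}^{\perp}$ is finite. Combined with (b), and with the observation that if $\mathsf{X}$ is finite then $\mathsf{X}^{\perp}\supseteq\{$ all indecomposables of level $\ge n\}$ is infinite (every sufficiently long arc is crossed by no arc of the bounded-length finite collection once — hmm, that's false in general, but every arc of length $\ge n$ that avoids crossing the finitely many arcs of $\mathcal{X}$: there are infinitely many such, e.g.\ long arcs "parallel" to and containing all of $\mathcal{X}$), we get that exactly one of (i), (ii) holds.

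The main obstacle I anticipate is the precise length-bound bookkeeping: showing that an arc of length $\ge n+1$ in an $n$-periodic Ptolemy diagram forces the diagram to be infinite (the iteration of the Ptolemy closure against $n$-shifts, and checking it really produces unboundedly long arcs rather than stabilizing), and dually showing that a finite $n$-periodic Ptolemy diagram always has an infinite $\mathsf{nc}$-complement containing arbitrarily long arcs. Both reduce to elementary but slightly delicate combinatorics of crossings on the number line, and the cleanest route is probably to first prove the auxiliary lemma "\emph{an $n$-periodic Ptolemy diagram either consists entirely of arcs of length $\le n$ and is finite, or contains arcs of every sufficiently large length and is infinite}," from which the Proposition is immediate.
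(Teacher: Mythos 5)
Your framework is the paper's: the three contents you isolate --- (1) a half containing an object of level $\ge n$ is infinite, (2) a finite half forces the other half infinite, (3) the two halves cannot both be infinite --- match the paper's three steps exactly, and your arguments for (1) and (3) are the paper's arguments (Ptolemy closure of an arc against its $n$-shift for (1); an arc of length $\ge n+1$ in $\mathcal{X}$ crosses, via an $n$-shift, every arc of at least that length, for (3)). Two remarks on details.

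First, the ``length-$n$ boundary case'' you flag is not actually fiddly. The arc $(i,j)$ and its shift $(i+n,j+n)$ cross precisely when $i < i+n < j < j+n$, i.e.\ when $j-i \ge n+1$, i.e.\ level $\ge n$. For $j-i = n$ they share the endpoint $i+n=j$ and do not cross, so the Ptolemy iteration does not start, which is exactly consistent with the claimed bound ``level $\le n-1$.'' Nothing to patch there.

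Second, and this is the genuine gap: your step (2). Your first attempt --- that $\mathsf{X}^{\perp}$ contains all indecomposables of level $\ge n$ --- is false, as you notice (a short arc of length $n$ in $\mathcal{X}$ already has $n$-shifts that cross many long arcs). But your proposed repair, ``long arcs `parallel' to and containing all of $\mathcal{X}$,'' cannot be made to work either: $\mathcal{X}$ is $n$-periodic, so its arcs occur throughout the whole number line, and no single arc can avoid them by `containing' them. What does work, and is the paper's argument, is an \emph{overarching} argument at a fixed vertex: choose an arc $(0,\ell)$ of maximal length in the finite, $n$-periodic Ptolemy diagram $\mathcal{X}$. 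Then no arc of $\mathcal{X}$ can overarch the vertex $\ell$: if $(i,j)\in\mathcal{X}$ had $i<\ell<j$, then either $i\le 0$ and $(i,j)$ is strictly longer than $(0,\ell)$, or $0<i<\ell<j$ and $(i,j)$ crosses $(0,\ell)$, whence the Ptolemy condition puts the longer arc $(0,j)$ in $\mathcal{X}$; both contradict maximality. By $n$-periodicity no vertex $\ell+rn$ is overarched either, so every arc $(\ell+rn,\ell+sn)$ with $r<s$ lies in $\mathsf{nc}\,\mathcal{X}$. That gives arcs of unbounded length, hence infinitely many indecomposables in $\mathsf{X}^{\perp}$. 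You should replace your sketch of (2) with this argument; without it the dichotomy is not established.

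(Also, the auxiliary lemma you propose at the end claims the infinite case contains arcs of \emph{every} sufficiently large length; the Ptolemy iteration only produces lengths in one residue class mod $n$ per starting arc, so the lemma as phrased is stronger than what you prove and is not needed --- ``arcs of unbounded length'' suffices.)
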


\begin{proof} By Proposition \ref{prop:Ptolemy} the condition
$\mathsf{X}=\,^{\perp}(\mathsf{X}^{\perp})$ is indeed equivalent
to $\mathcal{X}$ being an $n$-periodic Ptolemy diagram.
So we can 
prove the proposition in terms of the Ptolemy diagrams $\mathcal{X}$
and $\Sigma\,\mathsf{nc}\,\mathcal{X}$ associated 
with the subcategories $\mathsf{X}$ and $\mathsf{X}^{\perp}$, respectively. 

(1) We first show that if
$\mathsf{X}$ (or $\mathsf{X}^{\perp}$)
contains an indecomposable object of level $\ge n$
then $\mathsf{X}$ (or $\mathsf{X}^{\perp}$)
contains infinitely many indecomposable objects. 

In fact, by assumption there is an arc in $\mathcal{X}$ of the form 
$(i,i+kn+{\ell})$ with $k\in\mathbb{N}$ and $1\le {\ell} < n$.
By periodicity the arc $(i+kn,i+2kn+{\ell})$ 
is also in $\mathcal{X}$ and it crosses $(i,i+kn+{\ell})$ because 
$1\le {\ell}$. 
By assumption, $\mathcal{X}$ is a Ptolemy diagram which 
implies that the arc $(i,i+2kn+{\ell})$ must then be in $\mathcal{X}$ as well. 
Now repeat the argument starting with the arc 
$(i,i+2kn+{\ell})\in\mathcal{X}$;
inductively this gives infinitely many arcs in $\mathcal{X}$. 
Having pairwise different lengths, no two of them are shifts by multiples 
of $n$ of each other, i.e. they represent infinitely many different 
indecomposable objects in $\mathsf{X}$. 

The above argument only used that $\mathcal{X}$ is a Ptolemy diagram,
so it applies verbatim also to the Ptolemy diagram 
$\Sigma\,\mathsf{nc}\,\mathcal{X}$ (cf. Remark \ref{rem:nc-Ptolemy})
and the corresponding subcategory $\mathsf{X}^{\perp}$. 

Thus we have already shown that if one $\mathsf{X}$ and $\mathsf{X}^{\perp}$
has only finitely many indecomposable objects then these objects 
are all located strictly below level $n$ in the AR quiver of 
$\mathsf{C}_n$. 
 
(2) We now show that at least one of the subcategories 
$\mathsf{X}$ and $\mathsf{X}^{\perp}$ has
infinitely many indecomposable objects, i.e.\ we claim that if 
$\mathsf{X}$ (or $\mathsf{X}^{\perp}$) has only finitely many 
indecomposable objects then $\mathsf{X}^{\perp}$ (or $\mathsf{X}$)
has infinitely many indecomposable objects. 

By Proposition \ref{prop:Ptolemy} this amounts to showing for the 
corresponding collections of arcs that if $\mathcal{X}$ is finite
then $\Sigma\,\mathsf{nc}\,\mathcal{X}$ contains arcs of arbitrary length. 
Since $\Sigma$ only shifts by one vertex, the latter is equivalent
to $\mathsf{nc}\,\mathcal{X}$ containing arcs of arbitrary length. 

Pick an arc from the finite collection $\mathcal{X}$ of maximal length. 
W.l.o.g.\ (up to shifting) suppose its coordinates are $(0,{\ell})$ with 
${\ell}\ge 2$. The vertex ${\ell}$ of the $\infty$-gon
can not be 'overarched' by an arc from 
$\mathcal{X}$, i.e. there is no arc $(i,j)\in\mathcal{X}$ with
$i<{\ell}<j$; in fact, because $(0,{\ell})$ has maximal length, an
arc overarching the vertex ${\ell}$ 
had to cross $(0,{\ell})$, 
but then the Ptolemy condition would yield an arc
in $\mathcal{X}$ longer than $(0,{\ell})$, a contradiction.

Since the collection $\mathcal{X}$ is $n$-periodic this even implies that
no vertex of the $\infty$-gon
of the form ${\ell}+rn$ with $r\in\mathbb{Z}$ can be overarched
by an arc from $\mathcal{X}$. 

But this means that all the arcs $({\ell}+rn,{\ell}+sn)$ for integers $r<s$
can not cross any arc from $\mathcal{X}$, i.e.\ are in 
$\mathsf{nc}\,\mathcal{X}$. In particular, $\mathsf{nc}\,\mathcal{X}$
contains infinitely many arcs. 

Again, the above argument only used that $\mathcal{X}$ is a Ptolemy 
diagram and therefore applies verbatim also to 
the Ptolemy diagram 
$\Sigma\,\mathsf{nc}\,\mathcal{X}$, i.e.\ to $\mathsf{X}^{\perp}$. 

(3) For completing the proof of Proposition \ref{prop:finite-infinite}
it remains to show that not both subcategories $\mathsf{X}$ and
$\mathsf{X}^{\perp}$ can have 
infinitely many indecomposable objects. 

Suppose that $\mathsf{X}$ has infinitely many indecomposable objects.
In particular, 
in the corresponding collection $\mathcal{X}$ of arcs there is
an arc of some length ${\ell}\ge n+1$; w.l.o.g.\ (shifting) say its 
coordinates are $(0,{\ell})$. Now take any arc of the $\infty$-gon
of length $\ge {\ell}$. This can not be in
$\mathsf{nc}\,\mathcal{X}$ since by $n$-periodicity of $\mathcal{X}$
a shift of $(0,{\ell})$  
would cross the given arc since ${\ell}\ge n+1$. This argument says that as 
soon as $\mathcal{X}$ has an arc of length ${\ell}\ge n+1$, all arcs of length
$\ge {\ell}$ can't be in $\mathsf{nc}\,\mathcal{X}$. After identifying 
any arc in $\mathsf{nc}\,\mathcal{X}$ with all its shifts by multiples of 
$n$ this means that there are only finitely many indecomposable objects 
in the corresponding subcategory $\Sigma^{-1}\,\mathsf{X}^{\perp}$,
and hence also in $\mathsf{X}^{\perp}$, as claimed. 

Again, the argument works equally well with the roles of $\mathsf{X}$
and $\mathsf{X}^{\perp}$ interchanged. 

This completes the proof of Proposition \ref{prop:finite-infinite}.
\end{proof}

\section{The classification of torsion pairs}
\label{sec:classification}

In this section we will classify the torsion pairs
in the cluster tube $\mathsf{C}_n$, based on the arc model via 
$n$-periodic Ptolemy diagrams (cf. Propositions \ref{prop:Ptolemy}
and \ref{prop:finite-infinite}).

Recall from Proposition \ref{prop:IY-torsion} (and the remarks
preceding it) that every torsion pair 
in $\mathsf{C}_n$ has the form $(\mathsf{X},\mathsf{X}^{\perp})$
where $\mathsf{X}$ is a contravariantly finite 
subcategory.

\begin{Proposition} 
\label{prop:torsion-vs-Ptolemy}
\begin{enumerate}
\item[{(a)}] Let $(\mathsf{X},\mathsf{X}^{\perp})$ be a torsion
pair in the cluster tube $\mathsf{C}_n$. Then the corresponding
$n$-periodic collections of arcs $\mathcal{X}$ and 
$\Sigma\,\mathsf{nc}\,\mathcal{X}$ of the $\infty$-gon
are both Ptolemy diagrams
and precisely one of the subcategories $\mathsf{X}$ and 
$\mathsf{X}^{\perp}$ has only finitely many indecomposable objects. 
\item[{(b)}] Let $\mathcal{X}$ be an $n$-periodic Ptolemy diagram
of the $\infty$-gon and let $\mathsf{X}$ be the corresponding 
subcategory of $\mathsf{C}_n$. Then precisely one of the subcategories 
$\mathsf{X}$ and $\mathsf{X}^{\perp}$ has only finitely many 
indecomposable objects, and $(\mathsf{X},\mathsf{X}^{\perp})$
is a torsion pair.    
\end{enumerate}
\end{Proposition}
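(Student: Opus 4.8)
The plan is to combine the characterisations already established. The key point for part (a) is that Proposition~\ref{prop:IY-torsion} guarantees that a torsion pair $(\mathsf{X},\mathsf{X}^{\perp})$ has $\mathsf{X}$ contravariantly finite and $\mathsf{X}=\,^{\perp}(\mathsf{X}^{\perp})$. By Proposition~\ref{prop:Ptolemy} this last equality is equivalent to $\mathcal{X}$ being an $n$-periodic Ptolemy diagram, and also equivalent to $\mathcal{X}=\mathsf{nc}\,\mathsf{nc}\,\mathcal{X}$. Since by Remark~\ref{rem:nc-Ptolemy} the collection $\mathsf{nc}\,\mathcal{X}$ is always a Ptolemy diagram, and $\Sigma$ is just a shift on the AR quiver, the collection $\Sigma\,\mathsf{nc}\,\mathcal{X}$ (which by the proof of Proposition~\ref{prop:Ptolemy} corresponds to $\mathsf{X}^{\perp}$) is an $n$-periodic Ptolemy diagram as well. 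The assertion that precisely one of $\mathsf{X}$, $\mathsf{X}^{\perp}$ has only finitely many indecomposables is then exactly Proposition~\ref{prop:finite-infinite}, whose hypothesis $\mathsf{X}=\,^{\perp}(\mathsf{X}^{\perp})$ is met. So part (a) is essentially a matter of assembling these already-proved facts in the right order.

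For part (b) I would argue in two directions. Starting from an $n$-periodic Ptolemy diagram $\mathcal{X}$, Proposition~\ref{prop:Ptolemy} gives $\mathsf{X}=\,^{\perp}(\mathsf{X}^{\perp})$, and then Proposition~\ref{prop:finite-infinite} applies to give the finiteness dichotomy for $\mathsf{X}$ and $\mathsf{X}^{\perp}$. It remains to show $(\mathsf{X},\mathsf{X}^{\perp})$ is actually a torsion pair. By Proposition~\ref{prop:IY-torsion} it suffices to verify that $\mathsf{X}$ is contravariantly finite in $\mathsf{C}_n$ (the condition $\mathsf{X}=\,^{\perp}(\mathsf{X}^{\perp})$ being already in hand). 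This is where the finiteness dichotomy does the real work: if $\mathsf{X}$ has only finitely many indecomposable objects it is automatically functorially finite (it is generated by a single object, or one can take right approximations termwise), so contravariant finiteness is immediate. If instead $\mathsf{X}$ is infinite, then by Proposition~\ref{prop:finite-infinite} its perpendicular subcategory $\mathsf{X}^{\perp}$ has only finitely many indecomposables, hence is covariantly finite; dualising via the second half of Proposition~\ref{prop:IY-torsion}, the pair $(\,^{\perp}(\mathsf{X}^{\perp}),\mathsf{X}^{\perp})$ is a torsion pair provided $\mathsf{X}^{\perp}=(\,^{\perp}(\mathsf{X}^{\perp}))^{\perp}$, and since $\mathsf{X}=\,^{\perp}(\mathsf{X}^{\perp})$ we have $\mathsf{X}^{\perp}=\mathsf{X}^{\perp}$ trivially once we know $\mathsf{X}^{\perp}=(\,^{\perp}\mathsf{X}^{\perp})^{\perp}$; applying Proposition~\ref{prop:Ptolemy} to the Ptolemy diagram $\Sigma\,\mathsf{nc}\,\mathcal{X}$ corresponding to $\mathsf{X}^{\perp}$ yields exactly this identity. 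Either way $(\mathsf{X},\mathsf{X}^{\perp})$ is a torsion pair.

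The main obstacle I anticipate is the bookkeeping around contravariant/covariant finiteness: one must be careful that the finite subcategory in the dichotomy is the one whose finite-functorial-finiteness one exploits, and then transport the conclusion across the $(-)^{\perp}$ and $^{\perp}(-)$ operations using the correct half of Proposition~\ref{prop:IY-torsion}. Concretely, a clean way to organise this is to prove once and for all the small lemma that a subcategory of $\mathsf{C}_n$ with only finitely many indecomposable objects is functorially finite (immediate since $\mathsf{C}_n$ is Hom-finite and Krull--Schmidt, so finite direct sums of the indecomposables give approximations), and then feed it into whichever side of the dichotomy applies. Once that is in place, part (b) follows formally, and part (a) is just the reverse reading of the same chain of equivalences.
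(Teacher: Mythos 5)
Your proof is correct and follows essentially the same route as the paper: part (a) assembles Propositions~\ref{prop:IY-torsion}, \ref{prop:Ptolemy}, Remark~\ref{rem:nc-Ptolemy} and Proposition~\ref{prop:finite-infinite} in the same order, and part (b) uses the same case distinction on which of $\mathsf{X}$, $\mathsf{X}^{\perp}$ is finite, the observation that the finite side is functorially finite, and then the appropriate half of Proposition~\ref{prop:IY-torsion}. The only cosmetic awkwardness is in the infinite-$\mathsf{X}$ branch, where you invoke Proposition~\ref{prop:Ptolemy} a second time for $\Sigma\,\mathsf{nc}\,\mathcal{X}$ even though, as you yourself note, $\mathsf{X}^{\perp}=(\,^{\perp}(\mathsf{X}^{\perp}))^{\perp}$ already follows by substituting $\mathsf{X}=\,^{\perp}(\mathsf{X}^{\perp})$.
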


\begin{proof} 
(a) Let $(\mathsf{X},\mathsf{X}^{\perp})$ be a torsion pair.
By Proposition \ref{prop:IY-torsion} we have
$\mathsf{X} =\,^{\perp}(\mathsf{X}^{\perp})$ and then
by Proposition \ref{prop:Ptolemy}
the corresponding set of arcs $\mathcal{X}$ is a Ptolemy diagram. 
Moreover, by Remark \ref{rem:nc-Ptolemy} we have that also
$\Sigma\,\mathsf{nc}\,\mathcal{X} = \mathsf{nc}\,\Sigma\,\mathcal{X}$
is an $n$-periodic Ptolemy diagram. Finally, 
Proposition \ref{prop:finite-infinite} says that 
$\mathsf{X} =\,^{\perp}(\mathsf{X}^{\perp})$ implies that
precisely one of the subcategories $\mathsf{X}$ and 
$\mathsf{X}^{\perp}$ has only finitely many indecomposable objects. 

(b) Let $\mathcal{X}$ be an $n$-periodic Ptolemy diagram of the
$\infty$-gon. Proposition \ref{prop:finite-infinite} immediately
gives that precisely one of the subcategories $\mathsf{X}$ and 
$\mathsf{X}^{\perp}$ has only finitely many indecomposable objects.
It remains to show that $(\mathsf{X},\mathsf{X}^{\perp})$ is indeed
a torsion pair. If $\mathsf{X}$ (resp. $\mathsf{X}^{\perp}$) is
the subcategory with only finitely many indecomposable objects then
it is clearly functorially finite, thus in particular
contravariantly finite (resp. covariantly finite). 
The corresponding collections of arcs $\mathcal{X}$ (resp.
$\Sigma\,\mathsf{nc}\,\mathcal{X}$) are Ptolemy diagrams by
assumption (resp. by Remark \ref{rem:nc-Ptolemy}). Applying 
Proposition \ref{prop:Ptolemy} we deduce that 
$\mathsf{X}=\,^{\perp}(\mathsf{X}^{\perp})$ 
(resp. 
$\mathsf{X}^{\perp} = (\,^{\perp}(\mathsf{X}^{\perp}))^{\perp}$).
Then Proposition \ref{prop:IY-torsion} implies that
$(\mathsf{X},\mathsf{X}^{\perp})$ (resp. 
$(\,^{\perp}(\mathsf{X}^{\perp}),\mathsf{X}^{\perp})$)
is a torsion pair. Since $\mathsf{X}=\,^{\perp}(\mathsf{X}^{\perp})$
we obtain in any case that 
$(\mathsf{X},\mathsf{X}^{\perp})$ is a torsion pair, as claimed. 
\end{proof}

As an immediate 
application we can give an alternative proof of the following result 
of A.\,Buan, R.\,Marsh and D.\,Vatne.

\begin{Corollary}[\cite{BMV}]
The cluster tubes $\mathsf{C}_n$ do not contain any cluster tilting
objects. 
\end{Corollary}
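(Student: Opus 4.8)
The plan is to show that a cluster tilting object would give rise to a torsion pair $(\mathsf{X},\mathsf{X}^\perp)$ in which both subcategories coincide with $\add T$, and then to derive a contradiction from Proposition~\ref{prop:torsion-vs-Ptolemy}. Recall that a cluster tilting object $T$ in $\mathsf{C}_n$ is by definition rigid and satisfies $\add T = \{c \in \mathsf{C}_n \mid \Ext^1_{\mathsf{C}_n}(T,c)=0\}$. Since $\mathsf{C}_n$ is $2$-Calabi-Yau, $\Ext^1$ is symmetric, so this forces $\add T = {}^\perp(\Sigma\add T) = \Sigma\,\mathsf{nc}$-type conditions; more precisely, setting $\mathsf{X}=\add T$, the maximal rigidity condition translates (via Proposition~\ref{prop:Ext1-nc}) into $\mathsf{X}^\perp = \Sigma\,\mathsf{X}$ at the level of arc collections. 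In particular, $\mathsf{X}$ has the \emph{same} (finite!) number of indecomposable objects as $\mathsf{X}^\perp$, namely the number of indecomposable summands of $T$.

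The key steps, in order, are: first, check that $\mathsf{X}=\add T$ satisfies $\mathsf{X}={}^\perp(\mathsf{X}^\perp)$, so that $(\mathsf{X},\mathsf{X}^\perp)$ is a torsion pair by Proposition~\ref{prop:IY-torsion} — this uses that $\mathsf{X}$ is functorially finite (it has finitely many indecomposables) together with the maximal rigidity of $T$. Second, observe that since $T$ is rigid, every indecomposable summand $[(i,j)]$ has $j-i\le n$ by Remark~\ref{rem:ext1}(3), so every indecomposable of $\mathsf{X}$ has level $\le n-1$. Third, by the same maximal rigidity, $\Sigma^{-1}T \in \mathsf{X}^\perp$ and in fact $\Sigma\mathsf{X}\subseteq\mathsf{X}^\perp$; running the argument symmetrically (using $\Ext^1(c,T)=0 \Leftrightarrow \Ext^1(T,c)=0$) gives the reverse inclusion, so $\mathsf{X}^\perp = \Sigma\mathsf{X}$ and hence $\mathsf{X}^\perp$ also has only finitely many indecomposables. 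Fourth, invoke Proposition~\ref{prop:torsion-vs-Ptolemy}(a): for the torsion pair $(\mathsf{X},\mathsf{X}^\perp)$, \emph{precisely one} of $\mathsf{X}$ and $\mathsf{X}^\perp$ has finitely many indecomposables — contradicting the fact that both do. Therefore no cluster tilting object exists.

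The main obstacle I expect is step three, namely verifying cleanly that maximal rigidity of $T$ forces $\mathsf{X}^\perp$ to be finite — equivalently, that $\mathsf{X}^\perp = \Sigma\mathsf{X}$ rather than something larger. The subtlety is that $\mathsf{X}^\perp$ is defined via $\Hom$-vanishing, not $\Ext^1$-vanishing, but the $2$-Calabi-Yau property converts $\Hom_{\mathsf{C}_n}(x,y)$ into $\Ext^1_{\mathsf{C}_n}(x,\Sigma^{-1}y)$, so $y\in\mathsf{X}^\perp$ iff $\Ext^1(T,\Sigma^{-1}y)=0$ iff (by maximal rigidity of $T$) $\Sigma^{-1}y\in\add T$ iff $y\in\Sigma\add T$. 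Once this identification is in place the contradiction is immediate. One could alternatively bypass step one by arguing directly at the level of arc collections: a cluster tilting object corresponds to a finite $n$-periodic Ptolemy diagram $\mathcal{X}$ with $\mathsf{nc}\,\mathcal{X}=\Sigma\mathcal{X}$ (equivalently $\mathsf{nc}\,\mathsf{nc}\,\mathcal{X}=\mathcal{X}$ and $\mathsf{nc}\,\mathcal{X}$ finite), and Proposition~\ref{prop:finite-infinite} says a finite $n$-periodic Ptolemy diagram always has $\mathsf{nc}\,\mathcal{X}$ \emph{infinite} — an outright contradiction. This second route is the cleaner one and is what I would write up.
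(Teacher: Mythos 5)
Your proof is correct and takes essentially the same approach as the paper: observe that a cluster tilting object $T$ would give the torsion pair $(\add T,\Sigma\add T)$, whose two halves both have finitely many indecomposables, contradicting Proposition~\ref{prop:torsion-vs-Ptolemy}. The only (cosmetic) difference is that the paper simply cites Keller--Reiten for the fact that $(\add T,\Sigma\add T)$ is a torsion pair, whereas you re-derive this from maximal rigidity, the 2-Calabi-Yau symmetry, and Propositions~\ref{prop:Ext1-nc} and~\ref{prop:IY-torsion}.
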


\begin{proof} If $u$ was a cluster tilting object in $\mathsf{C}_n$
then $(\mathsf{add}(u),\Sigma\,\mathsf{add}(u))$ would be a cluster
tilting subcategory (cf. \cite[sec.\ 2.1]{KR}). In particular,
it would be a torsion pair, but one for which both subcategories
$\mathsf{add}(u)$ and $\Sigma\,\mathsf{add}(u)$ would have
finitely many indecomposable objects, contradicting Proposition
\ref{prop:torsion-vs-Ptolemy}.
\end{proof}

By Proposition \ref{prop:torsion-vs-Ptolemy} the classification of
torsion pairs $(\mathsf{X},\mathsf{X}^{\perp})$ in the cluster tube
$\mathsf{C}_n$ reduces to the classification of the possible halves
$\mathsf{X}$ (or $\mathsf{X}^{\perp}$) of a torsion pair containing
only finitely many indecomposable objects. We also know from
Proposition~\ref{prop:finite-infinite} that in this case all
indecomposable objects are strictly below level $n$ in the AR-quiver
of $\mathsf{C}_n$.

We shall need the following definition.

\begin{Definition}
  Let $(i,j)$ be an arc of the $\infty$-gon.  The {\em wing} $W(i,j)$
  of $(i,j)$ consists of all arcs $(r,s)$ of the $\infty$-gon such
  that $i\le r\le s\le j$, i.e.\ all arcs which are {\em overarched}
  by $(i,j)$.

  Interpreting $(i,j)$ as a vertex $[(i,j)]$ in the AR-quiver of the
  cluster tube $\mathsf{C}_n$, the corresponding wing is denoted
  $W[(i,j)]$; it consists of all vertices in the region bounded by 
  the lines from $[(i,j)]$ down to the vertices $[(i,i+2)]$ and 
  $[(j-2,j)]$,
  respectively.   
 
\end{Definition}

Then we are in the position to prove our main classification
result for torsion pairs in cluster tubes.

\begin{Theorem} \label{thm:torsion}
There are bijections between the following sets:
\begin{enumerate}
\item[{(i)}] 
Torsion pairs $(\mathsf{X},\mathsf{X}^{\perp})$
in the cluster tube $\mathsf{C}_n$ such that $\mathsf{X}$ has only
finitely many indecomposable objects. 
\item[{(ii)}] $n$-periodic Ptolemy diagrams $\mathcal{X}$ of the
$\infty$-gon such that all arcs in $\mathcal{X}$ have length at most $n$.
\item[{(iii)}] Collections 
$\{([(i_1,j_1)],[W_1]),\ldots,([(i_r,j_r)],[W_r])\}$
of pairs consisting of vertices $[(i_{\ell},j_{\ell})]$
of level $\le n-1$ in the AR-quiver of $\mathsf{C}_n$ and 
subsets $[W_{\ell}]\subseteq W[(i_{\ell},j_{\ell})]$ of their wings 
such that for any different $k,{\ell}\in\{1,\ldots,r\}$ we have
$$\Sigma\, W[(i_k,j_k)]\cap W[(i_{\ell},j_{\ell})]
=\emptyset
$$
and the $n$-periodic collection $W_{\ell}$ of arcs corresponding to $[W_{\ell}]$ is a Ptolemy diagram (in which every arc is 
overarched by some arc from the collection corresponding 
to $[(i_{\ell},j_{\ell})]$). 
\end{enumerate}
\end{Theorem}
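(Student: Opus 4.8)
The plan is to establish the bijections $\text{(i)}\leftrightarrow\text{(ii)}\leftrightarrow\text{(iii)}$ separately. The equivalence $\text{(i)}\leftrightarrow\text{(ii)}$ is essentially already in hand: by Proposition~\ref{prop:torsion-vs-Ptolemy} a torsion pair $(\mathsf{X},\mathsf{X}^{\perp})$ with $\mathsf{X}$ having only finitely many indecomposables corresponds to an $n$-periodic Ptolemy diagram $\mathcal{X}$, and conversely every $n$-periodic Ptolemy diagram gives a torsion pair; I must check only that ``$\mathsf{X}$ has finitely many indecomposables'' translates on the arc side exactly to ``all arcs of $\mathcal{X}$ have length $\le n$''. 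One direction is Proposition~\ref{prop:finite-infinite}(1): if $\mathcal{X}$ contains an arc of length $\ge n+1$ then $\mathsf{X}$ is infinite. For the converse, if all arcs have length $\le n$ then by $n$-periodicity there are only finitely many arcs of each admissible length up to shift by $n$, and only $n-1$ admissible lengths, so $\mathcal{X}$ has finitely many orbits, i.e.\ $\mathsf{X}$ has finitely many indecomposables. (This also re-proves, via Proposition~\ref{prop:finite-infinite}, that all these objects sit at level $\le n-1$.)

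The substantive content is the bijection $\text{(ii)}\leftrightarrow\text{(iii)}$, which amounts to a structural decomposition of a finite-length $n$-periodic Ptolemy diagram into ``wings''. Given such an $\mathcal{X}$, I would first pass to a fundamental domain and look at the arcs of $\mathcal{X}$ that are \emph{maximal}, meaning not overarched by any strictly longer arc of $\mathcal{X}$; call their $n$-orbits $[(i_1,j_1)],\dots,[(i_r,j_r)]$. Since every arc has length $\le n$, each $[(i_\ell,j_\ell)]$ is a genuine vertex of the AR-quiver of level $\le n-1$. The key combinatorial claim is that two distinct maximal arcs (even after shifting one by any multiple of $n$) cannot cross: if $(i,j)$ and a shift of $(k,l)$ crossed, the Ptolemy condition would produce an arc of $\mathcal{X}$ overarching one of them and strictly longer (using length $\le n$ to control the shifts), contradicting maximality of that one. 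I would phrase this exactly as the disjointness condition $\Sigma\,W[(i_k,j_k)]\cap W[(i_\ell,j_\ell)]=\emptyset$: a shift of one wing meeting the other wing is equivalent, by the definition of ``cross'' and of wings, to two such maximal arcs crossing after a shift. Then I set $W_\ell:=\mathcal{X}\cap W(i_\ell,j_\ell)$ (the part of the diagram inside the $\ell$-th wing), which is automatically $n$-periodic and a Ptolemy diagram (Ptolemy-ness is inherited: any crossing pair inside a wing has all its Ptolemy-completion arcs again inside that wing, since those arcs are overarched by the wing's boundary arc), and in which every arc is overarched by $(i_\ell,j_\ell)$ by construction. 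For the converse direction, given data as in (iii) one forms $\mathcal{X}:=\bigcup_\ell W_\ell$ (including the boundary arcs $(i_\ell,j_\ell)$ themselves); the disjointness hypothesis guarantees no arc from $W_k$ crosses any arc from $W_\ell$ for $k\ne\ell$ (nor any shift), so when checking the Ptolemy condition for $\mathcal{X}$ every crossing pair already lies in a single $W_\ell$, where the condition holds by hypothesis. Length $\le n$ holds since each $W_\ell$ lives under a vertex of level $\le n-1$, i.e.\ under an arc of length $\le n$.

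The main obstacle I anticipate is proving carefully that the maximal arcs partition the diagram, i.e.\ that \emph{every} arc of $\mathcal{X}$ is overarched by (a shift of) exactly one maximal arc, and that the $W_\ell$ are pairwise disjoint as subsets of $\mathcal{X}$. Existence of an overarching maximal arc uses finiteness (ascend from any arc through longer overarching arcs in $\mathcal{X}$, which must terminate); uniqueness uses the non-crossing claim above, since if an arc lay in two different wings $W(i_k,j_k)$ and a shift of $W(i_\ell,j_\ell)$, the two wing-boundary arcs would have to cross or nest, and nesting contradicts maximality of the shorter one while crossing is excluded. I would also need to verify that distinct collections in (iii) give distinct $\mathcal{X}$ and conversely, i.e.\ that the two assignments are mutually inverse: recovering $\{([(i_\ell,j_\ell)],[W_\ell])\}$ from $\mathcal{X}=\bigcup_\ell W_\ell$ requires knowing that the $(i_\ell,j_\ell)$ are exactly the maximal arcs of the union, which follows from disjointness of the wings together with the fact that inside $W_\ell$ every arc is overarched by $(i_\ell,j_\ell)$, so $(i_\ell,j_\ell)$ is the unique maximal arc of that wing and no arc of another wing can overarch it. The rest is bookkeeping with $n$-periodicity and the passage between arcs $(i,j)$ and AR-vertices $[(i,j)]$, which should be routine once the non-crossing/partition statement is nailed down.
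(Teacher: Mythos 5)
Your proposal follows essentially the same route as the paper's own proof: the equivalence (i)$\leftrightarrow$(ii) is obtained from Propositions~\ref{prop:Ptolemy}, \ref{prop:finite-infinite} and~\ref{prop:torsion-vs-Ptolemy} together with the observation that bounded arc length is equivalent to finitely many $n$-orbits, and the equivalence (ii)$\leftrightarrow$(iii) is obtained by taking the maximal (non-overarched) arcs as the $(i_\ell,j_\ell)$, using the Ptolemy condition to show two such arcs cannot cross (hence the wing-disjointness), and restricting $\mathcal{X}$ to each wing to get the $W_\ell$. You spell out a few more of the bookkeeping details (partition of the diagram by wings, mutual inverseness of the two assignments) than the paper's rather terse proof, but the argument is the same.
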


\begin{proof}
By Proposition \ref{prop:Ptolemy} we have to determine the
possible 
$n$-periodic Ptolemy diagrams $\mathcal{X}$ corresponding to
a subcategory $\mathsf{X}$ which appears as the first
half of a torsion pair $(\mathsf{X},\mathsf{X}^{\perp})$ in
$\mathsf{C}_n$ and has only finitely many indecomposable
objects.

Clearly, for such a Ptolemy diagram $\mathcal{X}$ all arcs
must have length at most $n$ (recall that the level of a vertex 
$[(i,j)]$ is defined as $j-i-1$, and the corresponding arc $(i,j)$
has length $j-i$). 

Conversely, let $\mathcal{X}$ be an $n$-periodic Ptolemy diagram
such that all arcs have length at most $n$. 
We have to show that the
corresponding subcategory $\mathsf{X}$ is the finite half of a
torsion pair. Coming from a Ptolemy diagram it is immediate from
Proposition \ref{prop:torsion-vs-Ptolemy} that $\mathsf{X}$ is one half of a 
torsion pair. Moreover, the bounded arc lengths clearly force  
$\mathsf{X}$ to be the subcategory having only finitely many 
indecomposable objects (in fact, an arc of length $m$
corresponds to an indecomposable object of level $m-1$, and
for each $m$ there are only finitely many such indecomposable
objects). 

This shows that the condition (ii) classifies finite halves of
torsion pairs.

For obtaining the description given in (iii) 
one just has to translate the arc
picture from (ii) to the AR-quiver of $\mathsf{C}_n$. 
The vertices $[(i_{\ell},j_{\ell})]$ occurring in (iii) 
are the ones corresponding to those 
arcs $(i_{\ell},j_{\ell})$ in $\mathcal{X}$
which are not overarched by any other arc from $\mathcal{X}$.  
Then, using the 
coordinate system on the AR-quiver one observes
that the condition 
$\Sigma\, W[(i_k,j_k)]\cap W[(i_{\ell},j_{\ell})]=\emptyset$ 
for any different $k,{\ell}$ just expresses the property
that the corresponding arcs $(i_k,j_k)$ and 
$(i_{\ell},j_{\ell})$ do not cross (since they are not
overarched by any arc from $\mathcal{X}$ they must not 
cross by the Ptolemy condition). 
The fact that the collections $W_{\ell}$ of arcs corresponding 
to $[W_{\ell}]$
can be chosen as arbitrary $n$-periodic Ptolemy diagrams
also has been observed above.  
\end{proof}

\section{Enumeration of torsion pairs}
\label{sec:enumerate}

For the enumeration of all torsion pairs in $\mathsf{C}_n$ it is
convenient to rephrase the characterisation of the collections in
Theorem~\ref{thm:torsion} (iii) as follows: torsion pairs
$(\mathsf{X},\mathsf{X}^{\perp})$ in the cluster tube $\mathsf{C}_n$
such that $\mathsf{X}$ has only finitely many indecomposable objects
are in bijection with
\begin{enumerate}
\item[{(iv)}] Collections $\{([(i_1,i_2)],[W_1]),
  ([(i_2,i_3)],[W_2]),\ldots,([(i_r,i_{r+1})],[W_r])\}$ with $1\leq
  i_{k+1}-i_k \leq n$ and $i_{r+1} = i_1+n$, where $[W_k]$ is the empty
  set if $i_{k+1}-i_k = 1$ and $[W_k]$ is a subset of the wing
  $W[(i_k, i_{k+1})]$ otherwise.  In the latter case, $[W_k]$
  interpreted as an $n$-periodic collection of arcs is required to be
  a Ptolemy
  diagram in which every arc is overarched by some arc from the
  collection corresponding to $[(i_k, i_{k+1})]$.
\end{enumerate}
Note that we have $\Sigma\, W[(i_k,i_{k+1})]\cap
W[(i_\ell,i_{\ell+1})] =\emptyset$ for all $k\neq\ell$ and whenever
$i_{k+1}-i_k\geq 2$ and $i_{\ell+1}-i_\ell\geq 2$ since the arcs
corresponding to the pairs in the collections
$[(i_1,i_2)],[(i_2,i_3)],\dots,[(i_r,i_1+n)]$ do not cross.

To proceed we need a combinatorial description of the individual
pairs $([(i,j)], [W])$ appearing in the characterisation (iv) above.
These pairs can be interpreted as sets of diagonals of an
$(j-i+1)$-gon with vertices labelled clockwise from $i$ to $j$ and
where a diagonal connecting vertices $k$ and $\ell$ is present if and
only if the arc $(k, \ell)$ is in $W$.  In the following, the edge
between vertices $i$ and $j$ is called the \Dfn{distinguished base
  edge}.

With this interpretation, the pairs appearing in the characterisation
(iv) above are precisely the Ptolemy diagrams on the $(j-i+1)$-gon
studied in~\cite{HJR-Ptolemy} and~\cite{Kluge-Rubey}, except that
there the vertices are not labelled.  As shown in~\cite[prop.\
2.5]{HJR-Ptolemy} (see also~\cite[sec.\ 1.1]{Kluge-Rubey}) the set
$\P$ of all such (unlabelled) Ptolemy diagrams with distinguished
base edge is the disjoint union of%
\begin{itemize}
\item the degenerate Ptolemy diagram, consisting of two vertices
  and the distinguished base edge only,
\item a triangle with a distinguished base edge and two Ptolemy
  diagrams glued along their distinguished base edges onto the
  other edges,
\item a clique, i.e.\ a diagram with at least four edges and
  all diagonals present, with a distinguished base edge and Ptolemy
  diagrams glued along their distinguished base edges onto the
  other edges,
\item an empty cell, i.e.\ a polygon with at least four edges
  without diagonals, with a distinguished base edge and Ptolemy
  diagrams glued along their distinguished base edges onto the
  other edges.
\end{itemize}
We call the vertex coming first on the base edge when going
counterclockwise the \Dfn{distinguished base vertex}.

As in~\cite[sec.\ 3.b.]{HJR-Ptolemy} we now apply the theory of
combinatorial species together with Lagrange inversion to find a
formula for the number of torsion pairs in the cluster tube
$\mathsf{C}_n$.  The necessary background can be found there, or, in
more detail, in the book by Bergeron, Labelle and Leroux~\cite[sec.\
1.3]{BLL}.

We turn $\P$ into a combinatorial species as follows: for any set $U$
of cardinality $n$ let $\P[U]$ be the set of all Ptolemy diagrams on
the $(n+1)$-gon whose vertices other than the base vertex are
labelled (bijectively) with the elements of $U$.  Let $\P(z) =
\P(z,x,y_1,y_2)$ be the associated weighted exponential generating
function, where the exponent of $x$ (of $y_1$, of $y_2$) records the
number of triangles (cliques, empty cells respectively).

\begin{Remark}\label{rmk:ordinary-gf}
  Note that there are $n!$ ways to label the vertices other than the
  base vertex, thus the exponential generating function $\P(z)$
  coincides with its so-called \lq ordinary\rq\ generating function:
  \begin{align*}
    \P(z)&=\sum_{n\geq 1}\sum_{P \in\P[\{1,\dots,n\}]}
    x^{\#\text{triangles in $P$}} y_1^{\#\text{cliques in $P$}}
    y_2^{\#\text{empty cells in $P$}} %
    \,\,\frac{z^n}{n!}\\
    &=\sum_{n\geq 1}\sum_{\substack{P \text{ a Ptolemy diagram}\\\text{on the
        $(n+1)$-gon}}}%
    x^{\#\text{triangles in $P$}} y_1^{\#\text{cliques in $P$}}
    y_2^{\#\text{empty cells in $P$}} %
    \,z^n.
  \end{align*}
\end{Remark}
As we include the degenerate diagram consisting of two vertices we
have 
$$\P(z) = z + x z^2 + (2x^2 + y_1 + y_2) z^3 +\dots$$

Let $C$ be the \Dfn{species of cycles} which for any set
$U=\{u_1,\dots,u_n\}$ produces the set $C[U]$ of all (oriented)
cycles with $n$ vertices labelled $u_1,\dots,u_n$.  There are
$(n-1)!$ such labelled cycles, thus the associated exponential
generating function is
$$
C(z) = \sum_{n\geq 0} \# C[\{1,2,\dots,n\}] \frac{z^n}{n!} =
\sum_{n\geq 1} \frac{z^n}{n} = \log\left(\frac{1}{1-z}\right).
$$

We also need the following two operations on species: $\F\circ\G$
denotes the \Dfn{composition} of two species $\F$ and $\G$, with
associated exponential generating function $\F\big(\G(z)\big)$.
Intuitively, the set $(\F\circ\G)[U]$ can be visualised by taking an
object produced by $\F$, and replacing all its labels by objects
produced by $\G$, such that the set of labels is exactly $U$.  A
precise definition is given in \cite{HJR-Ptolemy} and \cite[sec.\
1]{BLL}.

The second operation is called \Dfn{pointing}: for a set of labels
$U$ let $\F^\bullet[U] = \{(f, u): f\in\F[U], u\in U\}$.  In other
words, $\F^\bullet$ produces all $\F$-structures with a distinguished
label.  The associated exponential generating function satisfies
$\F^\bullet(z) = z\F'(z)$, where $\F'(z)$ is the derivative of
$\F(z)$.

\begin{Lemma}\label{lem:bij-tors-pairs}  
  Let $U$ be a set of cardinality $n$.  Then $(C\circ \P)^\bullet[U]$
  is in bijection with the set of $n$-periodic Ptolemy diagrams of
  the $\infty$-gon with all arcs having length at most $n$, where the
  cosets of the vertices of the $\infty$-gon modulo $n$ are labelled
  (bijectively) with the elements of $U$.
\end{Lemma}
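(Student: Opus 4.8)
The plan is to set up an explicit bijection between $(C\circ\P)^\bullet[U]$ and the $n$-periodic Ptolemy diagrams of the $\infty$-gon with arcs of length at most $n$, tracking what each ingredient on the left contributes geometrically. First I would recall that, by Theorem~\ref{thm:torsion} in the reformulation (iv), an $n$-periodic Ptolemy diagram with arcs of length at most $n$ is completely determined by an arc datum on a single fundamental domain: a cyclic sequence of vertices $i_1 < i_2 < \cdots < i_r < i_{r+1} = i_1+n$ with $1 \le i_{k+1}-i_k \le n$, together with, for each step of size $\ge 2$, a Ptolemy diagram on the $(i_{k+1}-i_k+1)$-gon filling the wing $W[(i_k,i_{k+1})]$. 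Reading this modulo $n$, the vertices $i_1,\dots,i_r$ become a subset of $\BZ/n$, and the consecutive gaps between them, cyclically arranged, form exactly an oriented cycle whose vertices are decorated by the polygons filling the wings. This is precisely the shape of a $C\circ\P$-structure: the outer species $C$ records the cyclic arrangement of gaps, and each gap of size $m$ carries a $\P$-structure on the $(m+1)$-gon (with the degenerate two-vertex diagram corresponding to a gap of size $1$, where the wing is forced empty).

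The one remaining piece of data is the labelling. In the target, the cosets of the $\infty$-gon's vertices modulo $n$ are labelled bijectively by $U$; in $(C\circ\P)[U]$, the elements of $U$ label the non-base vertices of the polygons inside the cycle, so altogether $n$ of the $\BZ/n$-cosets get used as labels but the $r$ "gap-boundary" cosets $i_1,\dots,i_r$ are the base vertices of the polygons and hence are \emph{not} individually labelled by $\P$. This is exactly why we need the pointing: the extra distinguished label $u \in U$ in $(C\circ\P)^\bullet[U]$ pins down which coset plays the role of $i_1$, i.e.\ it breaks the rotational symmetry of $\BZ/n$ and lets us recover the actual residue classes (not just their cyclic order) from the combinatorial data. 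So I would define the map by: given $\big((c,(P_v)_v),u\big) \in (C\circ\P)^\bullet[U]$, use $u$ together with the cycle $c$ to reconstruct the residues $i_1,\dots,i_r \in \BZ/n$ and the gap sizes, then lift each $P_v$ to the $n$-periodic collection of arcs overarched by the corresponding $[(i_k,i_{k+1})]$, using the labels in $U$ to identify which coset each non-base vertex of $P_v$ corresponds to.

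Then I would check the map is well defined (the union of the lifted wing-diagrams is $n$-periodic by construction and is a Ptolemy diagram because Ptolemy-ness is local to each wing once the top arcs $[(i_k,i_{k+1})]$ pairwise don't cross, which holds by Theorem~\ref{thm:torsion}~(iii)) and that it lands among diagrams with arcs of length $\le n$ (since each gap has size $\le n$). For the inverse, starting from an $n$-periodic Ptolemy diagram $\mathcal X$ with arcs of length $\le n$, I would extract the arcs not overarched by any other arc of $\mathcal X$; by the Ptolemy condition and $n$-periodicity these are non-crossing and their endpoints, read modulo $n$, are exactly the $i_k$; the cyclic sequence of gaps gives the cycle $c$, the wing of each top arc gives $P_v$ (with the distinguished base edge the edge between $i_k$ and $i_{k+1}$, the base vertex the one coming first counterclockwise), and the distinguished label $u$ is read off as the label of whichever coset we declare to be $i_1$ — made canonical once we fix, say, that $i_1$ is the residue class containing $0$, or more cleanly by noting that pointing precisely supplies this choice. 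Verifying that these two constructions are mutually inverse is then a matter of unwinding definitions.

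The main obstacle, and the step I would spend the most care on, is the labelling bookkeeping around the pointing: making precise how the single pointed element $u \in U$ together with the cyclic structure recovers an honest identification of the $\infty$-gon's vertex-cosets with $\BZ/n$ (as opposed to recovering that data only up to rotation), and confirming that the $r$ base vertices of the polygons account for exactly the $r$ "unlabelled-by-$\P$" cosets so that the remaining $n$ cosets match the $n$ elements of $U$ bijectively. Once that accounting is pinned down — essentially the observation that $\sum_k \big((i_{k+1}-i_k+1)-1\big) = \sum_k (i_{k+1}-i_k) = n$ so the non-base vertices of the polygons number exactly $n$, with the base vertices being shared between consecutive polygons around the cycle — the rest is a routine, if slightly tedious, unwinding of the species operations $\circ$ and $(-)^\bullet$ against the geometry of the $\infty$-gon.
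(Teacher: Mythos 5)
Your proposal is correct and follows essentially the same route as the paper's proof: both use the reformulation (iv) to read a length-$\le n$ periodic Ptolemy diagram as a cyclic sequence of wing-diagrams (a $C\circ\P$-structure), with the pointing breaking the rotational symmetry so that actual residues modulo $n$ (and hence the labelling of cosets by $U$) can be recovered rather than just their cyclic order. The only wobble is the middle-paragraph claim that the gap-boundary cosets $i_1,\dots,i_r$ are "not individually labelled" — in fact each such coset is labelled, being the last non-base vertex of the preceding polygon — but you correct this with the final accounting $\sum_k(i_{k+1}-i_k)=n$ and the observation that base vertices are shared.
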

\begin{proof}
  Let $\left((P_1, P_2, \dots, P_r), u\right)$ be an element of
  $(C\circ \P)^\bullet[U]$.  Let $s$ be such that $u$ is a label of
  $P_s$ and suppose that $u$ labels the $\ell$\textsuperscript{th}
  vertex of $P_s$ when going clockwise along the border of the
  diagram, starting after the base vertex.  Thus, if $P_s$ is a
  Ptolemy diagram on the $(n_s+1)$-gon, we have $1\leq\ell\leq n_s$.

  We can now construct in a bijective way a collection of pairs as in
  the characterisation (iv) given above.  First we set $i_1=-\ell$
  and $i_{k+1} = i_k + n_k$ for $1\leq k\leq r$.

  We then obtain $[W_{k+1}]$ from $P_{k+s}$ for $k\geq 0$ (setting
  $P_{k+s} = P_{k+s-r}$ for $k+s>r$) by the following procedure.
  Beginning with $k=0$, going clockwise along the border of the
  diagram $P_s$, starting at the base vertex, we number the vertices
  from $-\ell$ to $n_s-\ell$.  In particular, the vertex labelled
  with $u$ is assigned the number $0$.

  Having numbered the vertices of $P_{k+s}$, the last number used is
  also assigned to the base vertex of $P_{k+s+1}$.  We continue the
  numbering in this fashion.

  The indecomposable object with coordinates $[(i,j)]$, $j-i\geq 2$ is included in
  $[W_{k+1}]$ if and only if there is an arc from the vertex numbered
  $i$ to the vertex numbered $j$ in $P_{k+s}$.

  Finally we label the coset of vertices of the $\infty$-gon
  containing $i$ with the label of the unique non-base vertex
  numbered $i$ in one of the $P_k$.
\end{proof}

\begin{Example}
  Let us give an example for this bijection when $n=10$.  Consider
  the collection of pairs
  \begin{align*}
    \big\{
    &\big((-2,2), \{(8,1),(8,2),(9,1)\}\big),\\
    &\big((2,3), \{\}\big),\\
    &\big((3,6), \{(3,5),(3,6),(4,6)\}\big),\\
    &\big((6,8), \{(6,8)\}\big)\big\}.
  \end{align*}
  This collection satisfies the conditions in characterisation~(iv).

  The finite half of the corresponding torsion pair is visualised
  below.  In this picture the vertices of the first three levels of
  the quiver are shown (coordinates abbreviated modulo $n$) and
  arrows are omitted.  The vertices corresponding to the
  indecomposable objects are boxed and the wings are indicated by dotted
  lines (extended below level~$1$ to improve visibility).%
  \[
  \xymatrix @-2.5pc @! {
    &93&&04&&15&&26&&37&&48&&59&&60&&71&&\fbox{82}\ar@{.}[dddlll]\ar@{.}[ddrr]&&\\
    \ar@{.}[ddrr]&&03&&14&&25&&\fbox{36}\ar@{.}[ddll]\ar@{.}[ddrr]&&47&&58&&69&&70&&\fbox{81}&&92&&\\
    &02&&13&&24&&\fbox{35}&&\fbox{46}&&57&&\fbox{68}\ar@{.}[dl]\ar@{.}[dr]&&79&&80&&\fbox{91}&&\\
    &&&&&&&&&&&&&&&&&&&&&&&&&&\\
  }
  \]
  Labelling vertex $i$ of the $\infty$-gon with the label
  $i\bmod{10}$, the corresponding cycle of Ptolemy diagrams is
\[
\xy/r3pc/: 
{\xypolygon5"A"{\bullet}},
"A1"!{+R*+!L{1}},
"A2"!{+UU*+!DD{0}},
"A3"!{+L*+!R{9}},
"A4"!{+LD*+!RU{8}*\cir<5pt>{}},
"A5"!{+RD*+!LU{2}},
"A3";"A1"**@{-},
"A1";"A4"**@{-},
\endxy
,
\xy/r3pc/:
(0,0)*{\bullet}*\cir<5pt>{}!{+L*+!R{2}};(1.5,0)**@{-}*{\bullet}!{+R*+!L{3}}
\endxy
,
\xy/r3pc/:
{\xypolygon4"A"{\bullet}},
"A1"!{+RU*+!LD{5}},
"A2"!{+LU*+!RD{4}},
"A3"!{+LD*+!RU{3}*\cir<5pt>{}},
"A4"!{+RD*+!LU{6}},
"A1";"A3"**@{-},
"A2";"A4"**@{-},
\endxy
,
\xy/r3pc/:
{\xypolygon3"A"{\bullet}},
"A1"!{+U*+!D{7}},
"A2"!{+LD*+!RU{6}*\cir<5pt>{}},
"A3"!{+RD*+!LU{8}},
\endxy
\]
pointed at the vertex labelled $0$.
\end{Example}

We are now ready to prove the main theorem of this section:
\done{Martin: it is easy to refine the count further by the number of
  wings (including the degenerate wing).  However, I'm not sure it's
  worth the effort of explaining the details, so I commented the
  statement and the derivation out.}
\begin{Theorem}
  The number of torsion pairs in the cluster tube $\mathsf{C}_n$ is
  $$
  \T_n = \sum_{\ell\geq 0}%
  2^{\ell+1}\binom{n-1+\ell}{\ell}\binom{2n-1}{n-1-2\ell}.
  $$
  More precisely, the number of torsion pairs with a total of $k$
  triangles, $\ell$ cliques, and $m$ empty cells equals 
  $$
  \T_{n, k, \ell, m} = 2\binom{n-1+k+\ell+m}{n-1,k,\ell,m}\binom{n-1-k-\ell-m}{\ell+m}.
  $$
\end{Theorem}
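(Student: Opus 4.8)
The plan is to use the species machinery already in place: count first the torsion pairs whose finite half is the \emph{first} component, identify these via Lemma~\ref{lem:bij-tors-pairs} with pointed cyclic arrangements of Ptolemy diagrams, extract the relevant coefficients by Lagrange inversion, and reduce the count of \emph{all} torsion pairs to this by a symmetry argument. To begin, I would record the reduction to finite halves. By Proposition~\ref{prop:torsion-vs-Ptolemy} the torsion pairs in $\mathsf{C}_n$ correspond bijectively to the $n$-periodic Ptolemy diagrams $\mathcal{X}$ of the $\infty$-gon, so $\T_n$ counts such diagrams. The map $\mathcal{X}\mapsto\Sigma\,\mathsf{nc}\,\mathcal{X}$ sends $n$-periodic Ptolemy diagrams to $n$-periodic Ptolemy diagrams (Remark~\ref{rem:nc-Ptolemy}); since $\Sigma$ and $\mathsf{nc}$ commute and $\mathsf{nc}\,\mathsf{nc}\,\mathcal{X}=\mathcal{X}$ for Ptolemy $\mathcal{X}$ (Proposition~\ref{prop:Ptolemy}), its square is $\Sigma^2$, so it is a bijection, and by Proposition~\ref{prop:finite-infinite} it interchanges the diagrams all of whose arcs have length $\le n$ with those having an arc of length $>n$. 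Hence $\T_n=2\,\T_n^{\mathrm{fin}}$ and $\T_{n,k,\ell,m}=2\,\T^{\mathrm{fin}}_{n,k,\ell,m}$, where $\T^{\mathrm{fin}}$ counts $n$-periodic Ptolemy diagrams with all arcs of length $\le n$, refined by the numbers $k,\ell,m$ of triangles, cliques and empty cells. By Theorem~\ref{thm:torsion} with Lemma~\ref{lem:bij-tors-pairs}, $\T^{\mathrm{fin}}_{n,k,\ell,m}$ is the coefficient of $z^n x^k y_1^\ell y_2^m$ in $(C\circ\P)^\bullet(z)$.

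Next I would set up the generating function. Translating the structural decomposition of $\P$ recalled above (\cite[prop.\ 2.5]{HJR-Ptolemy}) into a functional equation — $z$ from the degenerate diagram, $x\P^2$ from a triangle with two Ptolemy diagrams glued onto its non-base edges, and $(y_1+y_2)\P^{k-1}$ from a clique or an empty cell on a $k$-gon ($k\ge 4$) with $k-1$ Ptolemy diagrams glued on — gives
\[
  \P=z+x\P^2+(y_1+y_2)\tfrac{\P^3}{1-\P},\qquad\text{i.e.}\qquad \P=z\,\psi(\P),\quad \psi(w)=\Bigl(1-xw-(y_1+y_2)\tfrac{w^2}{1-w}\Bigr)^{-1}.
\]
Using $C(z)=\log\tfrac1{1-z}$ and $\F^\bullet(z)=z\F'(z)$ we get $(C\circ\P)^\bullet(z)=\dfrac{z\,\P'(z)}{1-\P(z)}=z\bigl(C(\P)\bigr)'(z)$, so $[z^n](C\circ\P)^\bullet(z)=n\,[z^n]C(\P(z))$. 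The Lagrange--B\"urmann formula applied to $\P=z\,\psi(\P)$ with $H=C$, $H'(w)=\tfrac1{1-w}$, then yields $[z^n]C(\P(z))=\tfrac1n[w^{n-1}]\tfrac{\psi(w)^n}{1-w}$, hence, after clearing the inner fraction,
\[
  \T_n^{\mathrm{fin}}=[w^{n-1}]\,\frac{(1-w)^{n-1}}{\bigl(1-(1+x)w+(x-y_1-y_2)w^2\bigr)^n}.
\]

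Finally I would extract the coefficients. The efficient route is the substitution $w=v/(1+v)$, under which $[w^{n-1}]$ of a series becomes $[v^{n-1}]$ of the substituted series times the Jacobian factor $(1+v)^{n-2}$; a short computation turns the displayed expression into $\T_n^{\mathrm{fin}}=[v^{n-1}]\,(1+v)^{2n-1}\bigl(1+(1-x)v-(y_1+y_2)v^2\bigr)^{-n}$. Writing $1+(1-x)v-(y_1+y_2)v^2=(1+v)-v\bigl(x+(y_1+y_2)v\bigr)$ and expanding $\bigl(1-\tfrac{v\,(x+(y_1+y_2)v)}{1+v}\bigr)^{-n}$ geometrically, the coefficient of $x^k(y_1+y_2)^{j}$ equals $\binom{n-1+k+j}{k+j}\binom{k+j}{j}\,[v^{n-1-k-2j}](1+v)^{n-1-k-j}$, whose last factor is $\binom{n-1-k-j}{j}$. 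Splitting $(y_1+y_2)^{\ell+m}$ via $\binom{\ell+m}{\ell}$ and using $\binom{n-1+k+j}{k+j}\binom{k+j}{j}\binom{\ell+m}{\ell}=\binom{n-1+k+\ell+m}{n-1,k,\ell,m}$ with $j=\ell+m$ gives $\T^{\mathrm{fin}}_{n,k,\ell,m}=\binom{n-1+k+\ell+m}{n-1,k,\ell,m}\binom{n-1-k-\ell-m}{\ell+m}$, hence the asserted $\T_{n,k,\ell,m}$. Summing over $k,\ell,m$ — equivalently, setting $x=y_1=y_2=1$, which collapses the denominator to $(1-2v^2)^n$ — gives $\T_n=2\,[v^{n-1}](1+v)^{2n-1}(1-2v^2)^{-n}=\sum_{\ell\ge0}2^{\ell+1}\binom{n-1+\ell}{\ell}\binom{2n-1}{n-1-2\ell}$.

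The conceptual steps are short; the real work, and the only place something can go wrong, is the coefficient extraction in the last step. Since $\P$ is only implicitly defined, Lagrange inversion is unavoidable, and the resulting coefficient of $w^{n-1}$ in a rational function does not visibly simplify until one performs the substitution $w=v/(1+v)$ (equivalently, changes the summation variable so as to clear the factors $1/(1-w)$), after which the multinomial and binomial factors fall out cleanly. One should also verify carefully the bookkeeping in the reduction $\T_n=2\,\T_n^{\mathrm{fin}}$, namely that $\mathcal{X}\mapsto\Sigma\,\mathsf{nc}\,\mathcal{X}$ genuinely interchanges the two classes of diagrams with no overlap, which is where Proposition~\ref{prop:finite-infinite} is used.
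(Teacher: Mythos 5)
Your proof is correct and follows essentially the same route as the paper: reduce via Lemma~\ref{lem:bij-tors-pairs} to extracting $[z^n]$ from $(C\circ\P)^\bullet$, apply Lagrange inversion to the functional equation for $\P$, and expand. The one place you diverge is the coefficient extraction: the paper expands $\frac{1}{1-z}\,\frac{z^n}{Q(z)^n}$ directly via the multinomial theorem with $X=xz$, $Y_i=y_i\tfrac{z^2}{1-z}$ and then sums a geometric series for $(1-z)^{-(\ell+m+1)}$, whereas you substitute $w=v/(1+v)$ to clear the $1/(1-w)$ factors first, giving $[v^{n-1}](1+v)^{2n-1}\bigl(1+(1-x)v-(y_1+y_2)v^2\bigr)^{-n}$ before expanding. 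Both are valid and give the same product of binomials; your substitution is a bit cleaner, but the paper's direct expansion does also simplify without it, so it isn't strictly "unavoidable" as you suggest. You also justify the final factor of $2$ more carefully than the paper (which simply asserts it) by observing that $\mathcal{X}\mapsto\Sigma\,\mathsf{nc}\,\mathcal{X}$ is a bijection on $n$-periodic Ptolemy diagrams exchanging the finite and infinite classes; this is a worthwhile addition since it also handles the refinement by $k,\ell,m$ without ambiguity.
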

\begin{proof}
  By Lemma~\ref{lem:bij-tors-pairs} and Remark~\ref{rmk:ordinary-gf}
  we want to compute the coefficient of $z^n$ in the exponential
  generating function of $(C\circ \P)^\bullet$
  which equals
  $z\left(\log\left(\frac{1}{1-\P(z)}\right)\right)^\prime$.  
  Since $z\left(\sum_{n\geq 0} a_n z^n\right)^\prime = \sum_{n\geq 0}
  n a_n z^n$, this is the same as $n$ times the coefficient of $z^n$
  in $\log\left(\frac{1}{1-\P(z)}\right)$, which we compute using
  Lagrange inversion.

  Recall from \cite[sec. 2, proof of thm. 1.3]{Kluge-Rubey} that 
  the generating function $\P(z)$
  satisfies the following algebraic equation:
  $$
  \P(z) = z + x\P(z)^2 + (y_1+y_2)\frac{\P(z)^3}{1-\P(z)},
  $$
  or equivalently
  $$
  \P(z)\left(1- x\P(z) - (y_1+y_2)\frac{\P(z)^2}{1-\P(z)}\right) = z.
  $$
  Thus $Q(z) = z\big(1- x z - (y_1+y_2)\frac{z^2}{1-z}\big)$ is the
  compositional inverse of $\P(z)$, i.e. $Q\left(\P(z)\right) = z$.

  Note that $\left(\log\frac{1}{1-z}\right)^\prime = \frac{1}{1-z}$.
  By writing $[z^n] f(z)$ for the coefficient of
  $z^n$ in $f(z)$, we therefore have
  \begin{equation*}
    n [z^n] \log\left(\frac{1}{1-\P(z)}\right) = %
    [z^{n-1}] \frac{1}{1-z} \frac{z^n}{Q(z)^n}.
  \end{equation*}
  Applying the multinomial theorem, 
  $$(1-X-Y_1-Y_2)^{-n}=%
  \sum_{k,\ell,m}\binom{n-1+k+\ell+m}{n-1,k,\ell,m} X^k Y_1^\ell
  Y_2^m
  $$
  we obtain by setting $X=xz$, $Y_1=y_1\frac{z^2}{1-z}$ and
  $Y_2=y_2\frac{z^2}{1-z}$:
  \begin{align*}
    \frac{1}{1-z} \frac{z^n}{Q(z)^n} &= %
    \sum_{k,\ell, m} \binom{n-1+k+\ell+m}{n-1,k,\ell,m} %
    (xz)^k y_1^\ell y_2^m \frac{z^{2(\ell+m)}}{(1-z)^{\ell+m+1}}\\
    &=
    \sum_{k,\ell, m, i} \binom{n-1+k+\ell+m}{n-1,k,\ell,m} %
    \binom{\ell+m+i}{l+m} x^k y_1^\ell y_2^m t^j
    z^{i+k+2(\ell+m)}.
  \end{align*}
  To extract the coefficient of $z^{n-1}$ we set $i =
  n-1-k-2(\ell+m)$.  
  Finally, we have to multiply the result by two, since either of the
  two halves of a torsion pair could be the one with finitely many
  indecomposable objects.

  To obtain the total number of torsion pairs we have to set
  $x=y_1=y_2=1$ in $\P(z)$ and $Q(z)$.  For the expansion of
  $\frac{1}{1-z}\frac{z^n}{Q(z)}$ it is then convenient to write the
  compositional inverse as $Q(z) =
  z(1-z)\big(1-\frac{2z^2}{(1-z)^2}\big)$.  We leave the remaining
  details to the reader, which are completely analogous to the 
  computation above. 
\end{proof}
\begin{Remark}
  \label{rmk:asymptotics}
  Since $\P(z)$ satisfies an algebraic equation, so does 
  \begin{equation*}
    (C\circ \P)^\bullet(z)=
    z\left(\log\left(\frac{1}{1-\P(z)}\right)\right)^\prime
    = z\frac{\P^\prime(z)}{1-\P(z)}.
  \end{equation*}
  Therefore, the asymptotic behaviour of its coefficients can be
  extracted automatically, for example using the {\tt equivalent}
  function in Bruno Salvy's package {\bf gdev} available at
  \url{http://algo.inria.fr/libraries/}.  Thus, we learn that the
  leading term of the asymptotic expansion of $\T_n = [z^n] 2(C\circ
  \P)^\bullet(z)$ is
  $$
  \frac{\alpha}{\sqrt{\pi n}} \rho^n,
  $$
  where $\rho=6.847333996370022\dots$ is the largest positive root of
  $8x^3 - 48x^2 - 47x + 4$ and $\alpha=0.2658656601482029\dots$ is
  the smallest positive root of $71x^6 + 213x^4 - 72x^2 + 4$.
\end{Remark}

Because the combinatorial construction of torsion pairs is so nice,
we can also compute the number of those torsion pairs
that are invariant under
$d$-fold application of Auslander-Reiten translation:
\begin{Proposition}
  \label{prop:invariant}
  For any $d\in \mathbb{N}$ dividing $n$, the number of torsion 
  pairs in the cluster tube
  $\mathsf{C}_n$ that are invariant under $d$-fold application of the
  Auslander-Reiten translation $\tau$ equals the number of torsion
  pairs in the cluster tube $\mathsf{C}_{n/d}$.
\end{Proposition}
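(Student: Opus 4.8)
The plan is to prove the statement on the combinatorial side, by a descent argument between the $\infty$-gon models of $\mathsf{C}_n$ and $\mathsf{C}_{n/d}$. Write $e:=n/d$, which is again a divisor of $n$; a torsion pair is invariant under $d$-fold application of $\tau$ exactly when it is fixed by $\tau^{e}$, so it suffices to count the torsion pairs of $\mathsf{C}_n$ fixed by $\tau^{e}$. The basic tool, valid for every rank $m$, is that the torsion pairs of $\mathsf{C}_m$ are in bijection with the $m$-periodic Ptolemy diagrams of the $\infty$-gon: such a torsion pair has the form $(\mathsf{X},\mathsf{X}^\perp)$ and is determined by $\mathsf{X}$; the bijection between subcategories of $\mathsf{C}_m$ and $m$-periodic collections of arcs turns $\mathsf{X}$ into such a collection $\mathcal{X}$; and Proposition~\ref{prop:torsion-vs-Ptolemy} says that the collections arising this way are precisely the $m$-periodic Ptolemy diagrams. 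I would record this for $m=n$ and for $m=e$.

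Next I would transport the $\tau^{e}$-invariance condition across the bijection for $\mathsf{C}_n$. Since $\tau$ is an autoequivalence of $\mathsf{C}_n$ it sends $(\mathsf{X},\mathsf{X}^\perp)$ to $(\tau\mathsf{X},(\tau\mathsf{X})^\perp)$, so the torsion pair is fixed by $\tau^{e}$ if and only if $\mathsf{X}$ is, and it is enough to follow the corresponding collection of arcs $\mathcal{X}$. In the coordinate system on the AR-quiver, $\tau$ is the shift $(i,j)\mapsto(i-1,j-1)$, so being fixed by $\tau^{e}$ means that $\mathcal{X}$ is invariant under translation by $e$ (equivalently by $-e$). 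Since $\mathcal{X}$ is already invariant under translation by $n$ and $e\mid n$, the subgroup of $\BZ$ generated by $n$ and $e$ is $e\BZ$, so this simply says that $\mathcal{X}$ is $e$-periodic.

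Finally I would close the chain of bijections. An $e$-periodic collection of arcs is a fortiori $n$-periodic, and being a Ptolemy diagram is a property of a collection of arcs that does not refer to any rank; therefore the torsion pairs of $\mathsf{C}_n$ fixed by $\tau^{e}$ correspond bijectively to the $e$-periodic Ptolemy diagrams of the $\infty$-gon, which by the bijection of the first paragraph (applied with $m=e$) correspond bijectively to the torsion pairs of $\mathsf{C}_{e}=\mathsf{C}_{n/d}$. Equating the cardinalities of the two ends of the chain proves the proposition.

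I do not anticipate a real obstacle: the argument is a clean descent, and only two points need attention. The first is that "torsion pair $\leftrightarrow$ periodic Ptolemy diagram" is genuinely a bijection in each of the two ranks, so that the reverse direction is legitimate; in particular one must know that an $e$-periodic Ptolemy diagram, being $n$-periodic, really does define a torsion pair of $\mathsf{C}_n$ — this is supplied by Proposition~\ref{prop:torsion-vs-Ptolemy}(b), which also provides the contravariant finiteness needed for it to be a torsion pair — and that this torsion pair is then $\tau^{e}$-invariant by construction. The second is the elementary group-theoretic remark that, for $e\mid n$, an $n$-periodic collection of arcs that is invariant under translation by $e$ is the same thing as an $e$-periodic one.
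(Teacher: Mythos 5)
Your argument follows the same route as the paper: translate torsion pairs to $n$-periodic Ptolemy diagrams via Propositions~\ref{prop:Ptolemy} and~\ref{prop:torsion-vs-Ptolemy}, observe that $\tau$ acts as the one-step shift on the arc model, and use the fact (from Theorem~\ref{thm:torsion}) that an $m$-periodic Ptolemy diagram, for $m$ a divisor of $n$, corresponds to a torsion pair in $\mathsf{C}_m$. However, the opening equivalence of your first paragraph is false. With $e=n/d$, being invariant under $d$-fold application of $\tau$ means being fixed by $\tau^d$, not by $\tau^{e}$. In $\langle\tau\rangle\cong\BZ/n\BZ$ the element $\tau^d$ generates the subgroup $d\BZ/n\BZ$ of order $n/d$, whereas $\tau^{n/d}$ generates the subgroup $(n/d)\BZ/n\BZ$ of order $d$; these are different subgroups unless $d^2=n$, so fixedness under one is not equivalent to fixedness under the other.

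If one applies your own (correct) second and third paragraphs to the actual hypothesis, a $\tau^d$-invariant torsion pair corresponds to a Ptolemy diagram invariant under shift by $d$ and by $n$, hence $d$-periodic (since $d\mid n$), and so to a torsion pair in $\mathsf{C}_d$, not $\mathsf{C}_{n/d}$. This mismatch is not only in your proof; the paper's own proof makes the parallel slip of asserting that $\tau^d$-invariance gives $n/d$-periodicity, when it gives $d$-periodicity. What the subsequent corollary actually needs, and what your descent argument does establish once the first equivalence is dropped, is the statement for the unique subgroup of order $d$ of $\langle\tau\rangle$, namely $\langle\tau^{n/d}\rangle$: its fixed torsion pairs are the $n/d$-periodic Ptolemy diagrams, counted by $\T_{n/d}$. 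So either the proposition should be restated with $\mathsf{C}_d$ in place of $\mathsf{C}_{n/d}$, or its hypothesis should be invariance under $\tau^{n/d}$; your proof, minus the false equivalence, is a correct proof of the latter form.
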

\begin{proof}
  Applying $\tau$ to a torsion pair in the cluster tube
  $\mathsf{C}_{n}$ corresponds to shifting the associated
  $n$-periodic Ptolemy diagram on the $\infty$-gon by one vertex.
  Thus, a torsion pair invariant under $\tau^d$ corresponds to a
  $n/d$-periodic Ptolemy diagram on the $\infty$-gon, which in turn
  corresponds to a torsion pair in the cluster tube
  $\mathsf{C}_{n/d}$, by Theorem \ref{thm:torsion}. 
\end{proof}

\begin{Corollary}
  The number of torsion pairs in the cluster tube $\mathsf{C}_n$ up
  to Auslander-Reiten translation is
  $$
  \frac{1}{n}\sum_{d|n, d|k, d|\ell, d|m} \phi(d) \T_{n/d, k/d,
    \ell/d, m/d}.
  $$ 
\end{Corollary}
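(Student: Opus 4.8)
The plan is to count the orbits of the Auslander--Reiten translation $\tau$ acting on torsion pairs by Burnside's lemma (the Cauchy--Frobenius formula), using Proposition~\ref{prop:invariant} to evaluate the fixed-point sets.

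First I would set up the group action. By Theorem~\ref{thm:torsion} the set of torsion pairs of $\mathsf{C}_n$ is finite, and $\tau$ permutes it; since a torsion pair corresponds to an $n$-periodic collection of arcs of the $\infty$-gon and $\tau$ acts as the shift by one vertex, $\tau^n$ fixes every torsion pair, so we have an action of the cyclic group $\mathbb{Z}/n\mathbb{Z}$. Moreover $\tau$ preserves the combinatorial type $(k,\ell,m)$ recording the numbers of triangles, cliques and empty cells: in the description (iv) of Theorem~\ref{thm:torsion} this type is read off from the cyclic collection of wing-Ptolemy diagrams $[W_1],\dots,[W_r]$, and $\tau$ merely rotates this cyclic collection. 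Hence for each fixed triple $(k,\ell,m)$ the group $\mathbb{Z}/n\mathbb{Z}$ acts on the finite set $S_{n,k,\ell,m}$ of torsion pairs of that type, and Burnside's lemma gives
$$
  \#\{\text{orbits in }S_{n,k,\ell,m}\}
  \;=\; \frac1n\sum_{a=0}^{n-1}\big|\operatorname{Fix}_{S_{n,k,\ell,m}}(\tau^a)\big|.
$$

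The key reduction is that a torsion pair, that is, an $n$-periodic Ptolemy diagram with all arcs of length at most $n$, is fixed by $\tau^a$ exactly when its collection of arcs is invariant under the shift by $a$; being already invariant under the shift by $n$, this happens precisely when it is invariant under the shift by $d:=\gcd(a,n)$, i.e.\ when the diagram is $d$-periodic. Thus $\operatorname{Fix}(\tau^a)$ depends only on $d=\gcd(a,n)$. A $d$-periodic Ptolemy diagram with arcs of length $\le n$ automatically has all arcs of length $\le d$ (otherwise the argument of Proposition~\ref{prop:finite-infinite}(1), applied with period $d$, would force arcs of arbitrarily large length), so by Proposition~\ref{prop:invariant} such a torsion pair of $\mathsf{C}_n$ is nothing but a torsion pair of the smaller cluster tube $\mathsf{C}_{d}$. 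Under this identification the $\tau$-fundamental domain on $\mathsf{C}_n$ consists of $n/d$ copies of the $\tau$-fundamental domain on $\mathsf{C}_d$, so the type rescales by the factor $n/d$: a type-$(k,\ell,m)$ torsion pair of $\mathsf{C}_n$ fixed by $\tau^d$ corresponds to a type-$(kd/n,\ell d/n,md/n)$ torsion pair of $\mathsf{C}_d$, which requires $n/d$ to divide $k,\ell,m$. Hence $\big|\operatorname{Fix}_{S_{n,k,\ell,m}}(\tau^a)\big|=\T_{d,\,kd/n,\,\ell d/n,\,md/n}$ when $(n/d)\mid k,\ell,m$, and $0$ otherwise.

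Finally I would do the totient bookkeeping: for each divisor $d\mid n$ there are exactly $\phi(n/d)$ residues $a\in\{0,\dots,n-1\}$ with $\gcd(a,n)=d$, whence
$$
  \#\{\text{orbits in }S_{n,k,\ell,m}\}
  = \frac1n\sum_{\substack{d\mid n\\ (n/d)\mid k,\;(n/d)\mid\ell,\;(n/d)\mid m}}
  \phi(n/d)\,\T_{d,\,kd/n,\,\ell d/n,\,md/n},
$$
and the substitution $e:=n/d$ turns the right-hand side into $\tfrac1n\sum_{e\mid n,\;e\mid k,\;e\mid\ell,\;e\mid m}\phi(e)\,\T_{n/e,\,k/e,\,\ell/e,\,m/e}$, which is the asserted formula; summing over all $(k,\ell,m)$ and inserting the closed form from the preceding Theorem yields $\tfrac1n\sum_{d\mid n}\phi(d)\,\T_{n/d}$ in explicit terms. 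The only genuinely delicate point is the middle step: checking that $\tau^a$-invariance is governed by $\gcd(a,n)$ and, above all, that passing to the smaller cluster tube via Proposition~\ref{prop:invariant} rescales the type exactly by the factor $n/d$; once that bijection and its effect on $(k,\ell,m)$ are nailed down, the rest is a routine application of Burnside's lemma together with elementary properties of Euler's totient.
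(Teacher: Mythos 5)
Your proof is correct and follows the same strategy as the paper's (very terse) proof: the Cauchy--Frobenius/Burnside formula for counting orbits of $\langle\tau\rangle$, combined with Proposition~\ref{prop:invariant} to evaluate the fixed-point sets. The paper leaves essentially every detail to the reader; you fill in the three things that are genuinely in need of an argument: (1)~that being fixed by $\tau^a$ depends only on $d=\gcd(a,n)$ (since the diagram is already $n$-periodic, $a$-periodicity is equivalent to $d$-periodicity); (2)~that a $d$-periodic Ptolemy diagram whose arcs all have length at most $n$ in fact has all arcs of length at most $d$ --- your re-use of the iterating/crossing argument from the proof of Proposition~\ref{prop:finite-infinite}(1) with period $d$ is exactly the right way to see this, and it is needed so that the diagram really lies in the range of Theorem~\ref{thm:torsion}(ii) for $\mathsf{C}_d$; and (3)~that the statistic $(k,\ell,m)$ scales by the factor $n/d$ under the identification with $\mathsf{C}_d$, since a fundamental domain of length $n$ contains $n/d$ translates of a fundamental domain of length $d$, which is what forces the divisibility conditions $d\mid k,\ell,m$ in the final sum. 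The totient bookkeeping ($\phi(n/d)$ residues $a$ with $\gcd(a,n)=d$, followed by the substitution $e=n/d$) is routine and matches the paper's formula.

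One small remark on sourcing: you invoke Proposition~\ref{prop:invariant} to say that a $d$-periodic diagram is a torsion pair of $\mathsf{C}_d$; the proposition as printed phrases the correspondence as ``$\tau^d$-invariant in $\mathsf{C}_n$ $\leftrightarrow$ torsion pairs in $\mathsf{C}_{n/d}$'', but its proof really establishes the period-$d$ statement you use (the $n/d$ in the printed statement and its proof is best read as a slip for $d$). Your bookkeeping is internally consistent and leads to the correct formula, which is what matters.
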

\begin{proof}
  For every $d|n$ there are $\phi(d)$ elements of order $d$ in the
  cyclic group generated by $\tau$, and for each group element there
  are $\T_{n/d,k/d,\ell/d,m/d}$ torsion pairs that are invariant.
  The corollary now follows from the Cauchy-Frobenius formula for the
  number of orbits,
  $$
  \frac{1}{n}\sum_{b=0}^{n} \#\{\text{torsion pairs fixed by
    $\tau^b$}\},
  $$
  and Proposition~\ref{prop:invariant}.
\end{proof}

Finally we remark that the enumerative results above can be phrased
as a \Dfn{cyclic sieving phenomenon}, which involves a finite set
$\Set X$, a cyclic group $C$ of order $n$ acting on $\Set X$, and a
polynomial $X(q)$ with non-negative integer coefficients:
\begin{Definition}
  The triple $\big(\Set X,C,X(q)\big)$ \Dfn{exhibits the cyclic
    sieving phenomenon} if for every $c\in C$ we have
  $$
  X(\omega_{o(c)})=\size{\Set X^c},
  $$
  where $o(c)$ denotes the order of $c\in C$, $\omega_d$ is a
  $d$\textsuperscript{th} primitive root of unity and $\Set
  X^c=\left\{x\in \Set X:c(x)=x\right\}$ denotes the set of fixed
  points of $\Set X$ under the action of $c\in C$.
\end{Definition}
In particular, $X(1)=\size{\Set X}$, i.e.\ $X(q)$ is a
$q$-analogue of the generating function for $\Set X$.  For the
statement of this particular instance of the cyclic sieving
phenomenon, we need to define $q$-binomial and $q$-multinomial
coefficients:
\begin{Definition}
  For $0\le k \le n$ the \Dfn{$q$-binomial coefficient} is
  $$
  \qbinom{n}{k}=\frac{\qi{n}!}{\qi{k}!\qi{n-k}!},
  $$
  where $\qi{n}!=\qi{n}\qi{n-1}\cdots\qi{1}$ and $\qi n =
  1+q\dots+q^{n-1}$.  Analogously, the \Dfn{$q$-multinomial
    coefficient} is
  $$
  \qbinom{n_1+n_2+\dots n_\ell}{n_1, n_2,\dots,
    n_\ell}=\frac{\qi{n_1+n_2+\dots n_\ell}!}{\qi{n_1}!\qi{n_2}!\dots
    \qi{n_\ell}!},
  $$
  where $n_1, n_2,\dots n_\ell$ are non-negative integers.
\end{Definition}

\begin{Theorem}\label{thm:sieving}
  Let $\T_{n,k,\ell,m}$ be the set of torsion pairs in the cluster
  tube $\mathsf{C}_n$ with $k$ triangles, $\ell$ cliques and $m$
  empty cells.  Let $\tau$ be the action of Auslander-Reiten
  translation, and let
  $$
  \T_{n,k,\ell,m}(q)=
  2\qbinom{n-1+k+\ell+m}{n-1,k,\ell,m}%
  \qbinom{n-1-k-\ell-m}{\ell+m}.
  $$

  Then $\big(\T_{n,k,\ell,m}, \langle\tau\rangle,
  \T_{n,k,\ell,m}(q)\big)$ exhibits the cyclic sieving phenomenon.
\end{Theorem}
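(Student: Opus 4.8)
The plan is to apply the standard "cyclic sieving via representation-theoretic or evaluation" technique, where the polynomial $\T_{n,k,\ell,m}(q)$ is a product of $q$-multinomials and one must verify that its evaluation at roots of unity counts the fixed points of the $\tau$-action. First I would recall from Proposition~\ref{prop:invariant} that a torsion pair in $\mathsf{C}_n$ with combinatorial data $(k,\ell,m)$ fixed by $\tau^d$ (for $d\mid n$) corresponds bijectively to a torsion pair in $\mathsf{C}_{n/d}$; moreover, counting triangles, cliques and empty cells, such a $\tau^d$-invariant object has data $(k/d,\ell/d,m/d)$ (requiring $d\mid k$, $d\mid\ell$, $d\mid m$, otherwise there are no fixed points). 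Hence $\size{\T_{n,k,\ell,m}^{\tau^b}} = \T_{n/d,\,k/d,\,\ell/d,\,m/d}$ where $d=n/\gcd(n,b)$ is the order of $\tau^b$, and this equals $0$ unless $d$ divides each of $k,\ell,m$.

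The heart of the argument is then the following evaluation lemma: for $\omega$ a primitive $d$-th root of unity with $d\mid n$, one has
$$
\T_{n,k,\ell,m}(\omega)=
\begin{cases}
\T_{n/d,\,k/d,\,\ell/d,\,m/d} & \text{if }d\mid k,\ d\mid\ell,\ d\mid m,\\[2pt]
0 & \text{otherwise.}
\end{cases}
$$
This reduces to the well-known $q$-Lucas-type evaluation of $q$-binomial (and hence $q$-multinomial) coefficients at roots of unity: if $\omega$ is a primitive $d$-th root of unity, then $\qbinom{a}{b}_{q=\omega} = \binom{\lfloor a/d\rfloor}{\lfloor b/d\rfloor}\qbinom{a\bmod d}{b\bmod d}_{\omega}$, and the second factor vanishes unless $b\bmod d \le (a\bmod d)$ componentwise, which in the multinomial case forces $d$ to divide each part. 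I would apply this to each of the two factors $\qbinom{n-1+k+\ell+m}{n-1,k,\ell,m}$ and $\qbinom{n-1-k-\ell-m}{\ell+m}$ separately: the first forces $d\mid k$, $d\mid\ell$, $d\mid m$ (note $d\mid n$ already, so $d\mid n-1$ would fail unless $d=1$, so one must be careful — in fact the residue of $n-1$ mod $d$ is $d-1$, and the relevant condition is that the residues of the parts sum correctly, which is automatic here since $(n-1)+k+\ell+m \equiv (d-1)+0+0+0 \pmod d$ when $d\mid k,\ell,m$); once divisibility holds, the integer-part binomials collapse to exactly the $q$-multinomials defining $\T_{n/d,k/d,\ell/d,m/d}(1)$, and the prefactor $2$ matches.

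The main obstacle I anticipate is the bookkeeping in the $q$-Lucas step for the first factor, because $n-1$ is \emph{not} divisible by $d$ (only $n$ is), so the naive "$d$ divides every part" reading is wrong; one has to track the residue $d-1$ carefully and check that the $q$-multinomial $\qbinom{(n-1)\bmod d + 0}{(n-1)\bmod d,\,0,\,0,\,0}_\omega$ is just $1$, which it is since one part carries the entire residue. A secondary technical point is confirming that the bijection of Proposition~\ref{prop:invariant} genuinely divides the triangle/clique/empty-cell counts by $d$ — this follows because the $(n/d)$-periodic Ptolemy diagram is the quotient of the $n$-periodic one under the free $\mathbb{Z}/d$-action by $\tau^{n/d}$, so each orbit of cells has size exactly $d$. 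Once these two points are settled, the equality $X(\omega_{o(c)})=\size{\Set X^c}$ holds for all $c\in\langle\tau\rangle$, which is precisely the cyclic sieving phenomenon.
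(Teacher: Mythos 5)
Your proposal is correct and follows essentially the same route as the paper's own proof: the paper likewise reads off the fixed-point count from (the implicit refinement of) Proposition~\ref{prop:invariant} and then evaluates $\T_{n,k,\ell,m}(q)$ at a primitive $d$-th root of unity by expanding the $q$-multinomial as a product of $q$-binomials and invoking the $q$-Lucas theorem factor by factor, separating the case $d\mid k,\ell,m$ (where the residue of each part is $0$ except for $n-1$, whose residue is $d-1$, so everything collapses to $\T_{n/d,k/d,\ell/d,m/d}$) from the vanishing case. Your two flagged ``obstacles'' are exactly the points the paper also has to (and does) attend to, so the plan is sound as stated.
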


The proof, to be given below, is an 
application of the $q$-Lucas theorem, see, for
example, \cite[thm.\ 2.2]{MR1176155}:
\begin{Lemma}[$q$-Lucas theorem]
  Let $\omega$ be a primitive $d$\textsuperscript{th} root of unity
  and $a$ and $b$ non-negative integers.  Then
  \begin{equation*}
    \qbinom[\omega]{a}{b}=%
    \binom{\lfloor\frac{a}{d}\rfloor}{\lfloor\frac{b}{d}\rfloor}
    \qbinom[\omega]{a-d\lfloor\frac{a}{d}\rfloor}{b-d\lfloor\frac{b}{d}\rfloor}.
  \end{equation*}
  In particular, if $b\equiv 0\pmod d$ then
  \begin{equation*}
    \qbinom[\omega]{a}{b}=%
   \binom{\lfloor\frac{a}{d}\rfloor}{\lfloor\frac{b}{d}\rfloor}.
  \end{equation*}
\end{Lemma}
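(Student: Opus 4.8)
The plan is to verify the defining identity of the cyclic sieving phenomenon directly. For every $c$ in the cyclic group $\langle\tau\rangle$ (of order $n$), writing $d=o(c)$ — so $d\mid n$ and the relevant root of unity is a primitive $d$-th root $\omega_d$ — one must show $\T_{n,k,\ell,m}(\omega_d)=\size{\T_{n,k,\ell,m}^c}$. I would compute the right-hand side from Proposition~\ref{prop:invariant} and the left-hand side from the $q$-Lucas theorem, and observe that both sides reduce to the enumeration formula for torsion pairs in the smaller cluster tube $\mathsf{C}_{n/d}$.

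For the right-hand side: a torsion pair is fixed by $c$ if and only if it is fixed by the subgroup $\langle c\rangle$, which is the unique subgroup of $\langle\tau\rangle$ of order $d$ and is generated by $\tau^{n/d}$. Under the bijections of Theorem~\ref{thm:torsion} and Lemma~\ref{lem:bij-tors-pairs}, $\tau$ acts as the shift by one vertex on $n$-periodic Ptolemy diagrams of the $\infty$-gon, so a torsion pair is fixed by $c$ exactly when its associated diagram is $(n/d)$-periodic. By Proposition~\ref{prop:invariant}, refined by the numbers of triangles, cliques and empty cells precisely as in the proof of the Corollary above — the point being that unrolling an $(n/d)$-periodic diagram to an $n$-periodic one simply repeats each of its triangles, cliques and empty cells $d$ times — such diagrams are in statistic-scaled bijection with torsion pairs in $\mathsf{C}_{n/d}$. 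Hence $\size{\T_{n,k,\ell,m}^c}=\T_{n/d,\,k/d,\,\ell/d,\,m/d}$, with the convention that this vanishes unless $d$ divides each of $k,\ell,m$, and that, by the enumeration theorem applied to $\mathsf{C}_{n/d}$, it equals $2\binom{n/d-1+(k+\ell+m)/d}{n/d-1,\,k/d,\,\ell/d,\,m/d}\binom{n/d-1-(k+\ell+m)/d}{(\ell+m)/d}$ when those divisibilities hold.

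For the left-hand side I would evaluate $$\T_{n,k,\ell,m}(q)=2\qbinom{n-1+k+\ell+m}{n-1,k,\ell,m}\qbinom{n-1-k-\ell-m}{\ell+m}$$ at $q=\omega_d$ by applying the $q$-Lucas theorem factor by factor, after writing the $q$-multinomial as the product $\qbinom{n-1+k+\ell+m}{n-1}\,\qbinom{k+\ell+m}{k}\,\qbinom{\ell+m}{\ell}$ of $q$-binomials. The one genuinely useful observation is that $d\mid n$ forces $(n-1)\bmod d=d-1$. From this the vanishing case falls out by a short induction through the three factors: the first factor is nonzero only if $d\mid(k+\ell+m)$ (otherwise $(n-1+k+\ell+m)\bmod d<d-1$, so its residual $q$-binomial has upper entry smaller than its lower entry $d-1$ and vanishes); granting that, $\qbinom[\omega_d]{k+\ell+m}{k}$ forces $d\mid k$, and then $\qbinom[\omega_d]{\ell+m}{\ell}$ forces $d\mid\ell$ (whence $d\mid m$). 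So $\T_{n,k,\ell,m}(\omega_d)=0$ unless $d$ divides each of $k,\ell,m$, matching the right-hand side. When $d$ does divide $k,\ell,m$, all residual $q$-multinomials collapse to $1$, the second main factor $\qbinom[\omega_d]{n-1-k-\ell-m}{\ell+m}$ is handled by the ``in particular'' case of $q$-Lucas (as $\ell+m\equiv 0\pmod d$), and using $\lfloor(n-1)/d\rfloor=n/d-1$ and $\lfloor(n-1-k-\ell-m)/d\rfloor=n/d-1-(k+\ell+m)/d$ the surviving ordinary binomials assemble into exactly $2\binom{n/d-1+(k+\ell+m)/d}{n/d-1,\,k/d,\,\ell/d,\,m/d}\binom{n/d-1-(k+\ell+m)/d}{(\ell+m)/d}=\T_{n/d,\,k/d,\,\ell/d,\,m/d}$, again matching the right-hand side. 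The case $c=\id$ (so $d=1$, $\omega_1=1$) recovers the refined enumeration theorem, confirming that $\T_{n,k,\ell,m}(q)$ is a genuine $q$-analogue of $\T_{n,k,\ell,m}$.

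The main obstacle is organizational rather than conceptual: one must track all floors, residues and the several $q$-binomial factors simultaneously, and dispose of the degenerate ranges — where $k+\ell+m>n-1$, or where some binomial has negative upper entry — by checking that both sides of the sieving identity are then $0$. A secondary point worth making explicit is the statistic-refined form of Proposition~\ref{prop:invariant}: its proof already supplies a geometric bijection, so the triangle/clique/empty-cell counts are transported by it, but since this refinement is used only implicitly in the Corollary above it deserves a sentence here. Once the congruence $(n-1)\equiv-1\pmod d$ is in hand, the rest is routine verification.
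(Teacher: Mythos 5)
Your proposal does not address the statement it is supposed to prove. The statement is the $q$-Lucas theorem itself, i.e.\ the identity $\qbinom[\omega]{a}{b}=\binom{\lfloor\frac{a}{d}\rfloor}{\lfloor\frac{b}{d}\rfloor}\qbinom[\omega]{a-d\lfloor\frac{a}{d}\rfloor}{b-d\lfloor\frac{b}{d}\rfloor}$ for a primitive $d$\textsuperscript{th} root of unity $\omega$. What you have written is an argument for Theorem~\ref{thm:sieving} (the cyclic sieving phenomenon for torsion pairs), and it explicitly uses the $q$-Lucas theorem as a black box (``the left-hand side from the $q$-Lucas theorem''). As a proof of the Lemma this is circular: at no point do you establish anything about the evaluation of a Gaussian binomial at a root of unity -- you only apply it. (Your sieving argument itself, evaluating $\T_{n,k,\ell,m}(q)$ at $\omega_d$, showing vanishing unless $d$ divides $k,\ell,m$, and matching the result with the fixed-point count via Proposition~\ref{prop:invariant}, essentially reproduces the paper's proof of Theorem~\ref{thm:sieving}; but that theorem is a consequence of the Lemma, not the Lemma itself.)

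For comparison: the paper does not prove the Lemma either, but quotes it from \cite[thm.\ 2.2]{MR1176155}, so a citation would have been an acceptable answer; a freestanding proof, however, needs an actual argument about Gaussian binomials at roots of unity. The standard route is via the $q$-binomial theorem $\prod_{i=0}^{a-1}(1+xq^{i})=\sum_{b\ge 0}q^{\binom{b}{2}}\qbinom{a}{b}x^{b}$: at $q=\omega$ every block of $d$ consecutive factors equals $1-(-x)^{d}$, so writing $a=d\lfloor\frac{a}{d}\rfloor+a_{0}$ the product becomes $\bigl(1-(-x)^{d}\bigr)^{\lfloor a/d\rfloor}\prod_{i=0}^{a_{0}-1}(1+x\omega^{i})$, and since the second factor has degree less than $d$, comparing coefficients of $x^{b}$ (and checking that the powers of $\omega$ coming from $q^{\binom{b}{2}}$ and the signs match) yields exactly the stated factorization; alternatively one can induct on $a$ using the $q$-Pascal recurrence. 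Some argument of this kind is what is missing from your proposal; the ``in particular'' clause is then immediate since $b\equiv 0\pmod d$ forces the residual factor to be $\qbinom[\omega]{a_{0}}{0}=1$.
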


\begin{proof}[Proof of Theorem~\ref{thm:sieving}]
  Suppose that $d|n$.  We have to evaluate $\T_{n,k,\ell,m}(q)$ at
  $q=\exp(2\pi i/d)$.  Suppose first that $d|k$, $d|\ell$ and $d|k$.
  Then, expressing the multinomial coefficient as a product of
  binomial coefficients and applying the $q$-Lucas theorem, we have
  \begin{align*}
    \T_{n,k,\ell,m}(q)%
    =&2\qbinom{n-1+k+\ell+m}{k}%
    \qbinom{n-1+\ell+m}{\ell}%
    \qbinom{n-1+m}{m}\\%
    &\quad\qbinom{n-1-k-\ell-m}{\ell+m}\\
    =&2\binom{\bar n+\bar k+\bar\ell+\bar m-1}{\bar k}%
    \binom{\bar n+\bar\ell+\bar m-1}{\bar\ell}%
    \binom{\bar n+\bar m - 1}{\bar m}\\%
    &\quad\binom{\bar n-\bar k-\bar\ell-\bar m - 1}{\bar\ell+\bar m}\\
    =& \binom{\bar n-1+\bar k+\bar \ell+\bar m}{\bar n-1, \bar k,
      \bar\ell, \bar m}%
    \binom{\bar n-\bar k-\bar\ell-\bar m - 1}{\bar\ell+\bar m},
  \end{align*}
  where we put $\bar n = n/d$, $\bar k = k/d$, $\bar \ell = \ell/d$
  and $\bar m = m/d$.

  Let us now consider the case that $d$ does not divide all of $k$,
  $\ell$ and $m$.  We show that in this situation
  $\qbinom{n-1+k+\ell+m}{n-1,k,\ell,m}$ vanishes.  Since this
  expression is symmetric in $k$, $\ell$ and $m$ it is sufficient to
  consider the case where $d$ does not divide $m$.  Furthermore,
  expanding the multinomial coefficient in binomial coefficients as
  above, it is sufficient to show that $\qbinom{n-1+m}{m}$ vanishes.
  Suppose that $m\equiv \alpha\pmod d$ with $0 < \alpha < d$.  Then
  $d\lfloor\frac{n-1+m}{d}\rfloor = n/d + \alpha$ and
  $d\lfloor\frac{m}{d}\rfloor = m-\alpha$.  Thus, by the $q$-Lucas
  theorem we obtain
  $$
  \qbinom{n-1+m}{m} =%
  \binom{\lfloor\frac{n-1+m}{d}\rfloor}{\lfloor\frac{m}{d}\rfloor}%
  \qbinom{\alpha-1}{\alpha}
  $$
  which is indeed zero.
\end{proof}


\begin{thebibliography}{19}

\bibitem{BBD} A.\ A.\ Beilinson, J.\ Bernstein, P.\ Deligne, 
\textit{Faisceaux pervers}, Ast\'{e}risque 100 (1982) 
(Proceedings of the conference ``Analysis and topology on singular
spaces'', Luminy, 1981).

\bibitem{BBM} K.\ Baur, A.\ B.\ Buan, R.\ J.\ Marsh,
{\it Torsion pairs and rigid objects in tubes},
Preprint (2011), arXiv:1112.6132v1.

\bibitem{BMtube}  K.\ Baur, R.\ J.\ Marsh, {\it A geometric model
of tube categories}, J. Algebra 362 (2012), 178-191.

\bibitem{BLL} F.\ Bergeron, G.\ Labelle, and P.\ Leroux,
Combinatorial species and tree-like structures, Encyclopedia
Math.\ Appl., Vol.\ 67, Cambridge University Press, Cambridge, 1998.


\bibitem{Bondarko} M.\ V.\ Bondarko, \textit{Weight structures
    vs. $t$-structures; weight filtrations, spectral sequences, and
    complexes (for motives and in general)}, J.\
  K-Theory 6 (2010), 387--504.

\bibitem{Brenner-Butler}
S.\ Brenner, M.\ C.\ R.\ Butler, {\it Generalizations of the 
Bernstein-Gel'fand-Ponomarev reflection functors}, 
In: Representation theory, II (Proc. Second Internat. Conf., 
Carleton Univ., Ottawa, 1979), pp. 103--169, Lecture Notes in Math., 
Vol.\,832, Springer, Berlin-New York, 1980.

\bibitem{BMRRT}  A.\ B.\ Buan, R.\ J.\ Marsh, M.\ Reineke, I.\ Reiten,
and G.\ Todorov, {\it Tilting theory and cluster combinatorics}, Adv.\
Math. 204 (2006), 572--618. 

\bibitem{BMV} A.\ B.\ Buan, R.\ J.\ Marsh, D.\ F.\ Vatne,
{\it Cluster structures from 2-Calabi-Yau categories with loops},
Math. Z. 265 (2010), 951--970.

\bibitem{Dickson}  S.\ E.\ Dickson, {\it A torsion theory for abelian
categories}, Trans.\ Amer.\ Math.\ Soc.\ 121 (1966), 223--235. 

\bibitem{Happel-Ringel} D.\ Happel, C.\ M.\ Ringel,
{\it Tilted algebras}, Trans. Amer. Math. Soc. 274 (1982), no. 2, 399--443.

\bibitem{HJ}  T.\ Holm, P.\ J\o rgensen, {\it On a cluster category
of infinite Dynkin type, and the relation to triangulations of the
infinity-gon}, Math. Z. 270 (2012), 277--295.

\bibitem{HJR-Ptolemy}
T.~Holm, P.~J{\o}rgensen, M.~Rubey, 
\textit{Ptolemy diagrams and torsion pairs in the
cluster category of Dynkin type $A_n$},
J. Algebraic Combin. 34 (2011), 507-523.

\bibitem{IY}  O.\ Iyama, Y.\ Yoshino, {\it Mutation in
triangulated categories and rigid Cohen-Macaulay mo\-du\-les},
Invent.\ Math.\ 172 (2008), 117--168.

\bibitem{Keller}
B.\ Keller, {\it On triangulated orbit categories}, 
Doc. Math. 10 (2005), 551--581. 

\bibitem{KR}
B.\ Keller, I.\ Reiten, {\it Cluster-tilted algebras are 
Gorenstein and stably Calabi-Yau}, Adv. Math. 211 (2007), 123--151.

\bibitem{Kluge-Rubey} S.\ Kluge, M. Rubey, {\it Cyclic sieving for
    torsion pairs in the cluster category of Dynkin type $A_n$},
  Preprint (2011), arXiv:1101.1020.

\bibitem{Ng}  P.\ Ng, {\it A characterization of torsion theories
in the cluster category of Dynkin type $A_{\infty}$}, Preprint (2010), 
arXiv:1005.4364v1.

\bibitem{Pauksztello}  D.\ Pauksztello, \textit{Compact corigid objects
    in triangulated categories and co-t-structures}, Cent.\ Eur.\ J.\
  Math.\ 6 (2008), 25--42.
  
\bibitem{Ringel} C.\ Ringel, {\it Tame algebras and integral quadratic forms},
Lecture Notes in Math. 1099. Springer-Verlag, Berlin, 1984. 

\bibitem{MR1176155}
B.\,E. Sagan, {\it Congruence properties of {$q$}-analogs}, Adv.
Math. 95 (1992), no.~1, 127--143.
\end{thebibliography}
\end{document}